\documentclass[11pt]{article}
\usepackage{amssymb,amsthm,amsmath,mathtools}
\usepackage[dvipsnames]{xcolor}
\usepackage{graphicx}
\numberwithin{equation}{section}
\theoremstyle{plain}
\newtheorem{theorem}{Theorem}[section]
\newtheorem{lemma}[theorem]{Lemma}
\newtheorem{proposition}[theorem]{Proposition}
\newtheorem{corollary}[theorem]{Corollary}
\newtheorem{definition}[theorem]{Definition}
\newtheorem{remark}[theorem]{Remark}
\theoremstyle{definition}

\usepackage{vmargin}
\setmarginsrb{2cm}{1cm}{2cm}{3cm}{1cm}{1cm}{2cm}{1cm}
\usepackage[numbers]{natbib}

\begin{document}
\title{On the Homogenization of Norm and Time Optimal Controls for  \\
Parabolic Equations with Oscillating Coefficients}
\author{Jon Asier Bárcena-Petisco\thanks{Department of Mathematics, University of the Basque Country UPV/EHU, Barrio Sarriena s/n, 48940, Leioa, Spain. ORCID: 0000-0002-6583-866X E-mail: {\tt jonasier.barcena@ehu.eus}.}\ \thanks{Basque Center for Applied Mathematics (BCAM), Alameda Mazarredo 14, 48009, Bilbao, Bizkaia, Spain} \and  Marius Tucsnak\thanks{
Institut de Mathématiques de Bordeaux,  UMR 5251, Universit\'e de Bordeaux/Bordeaux INP/CNRS,
351, Cours de la Libération - F 33 405 TALENCE, France, and Institut Universitaire de France (IUF). 
ORCID: 0000-0001-9410-1811.
E-mail: {\tt marius.tucsnak@u-bordeaux.fr}.}
}

\maketitle

\begin{abstract}
We study homogenization of norm-optimal and time-optimal controls for a one-dimensional heat equation with rapidly oscillating periodic diffusion coefficients. The controls act on a fixed subinterval and are subject to pointwise constraints in space and time.

We first derive final-state observability and null-controllability estimates that are uniform with respect to the oscillation scale. Combining these estimates with periodic homogenization and an abstract theory of optimal controls for parabolic systems, we prove convergence of the norm-optimal values and controls to their homogenized counterparts. We then deduce convergence of the optimal times and time-optimal controls. The controls converge weakly-star in the space of essentially bounded functions and strongly in every Lebesgue space with finite exponent, after extension to a common time interval when necessary.
\end{abstract}

{\bf Keywords.}  Bang-bang controls; Homogenization; Norm-optimal control; Null controllability; Observability from measurable sets; Parabolic equations; Time-optimal control.

{\bf AMS subject classifications.} Primary: 35B27; 93C25.
Secondary: 35K05;  93B05; 93C20.

{\bf Funding.} This work was done while J.A.B.P. was doing a scientific stay at the University of Bordeaux funded by the CAS24/225 ``José Castillejo'' grant, within the framework of the State Subprogramme for Training, Attraction and Retention of Talent under PEICTI 2024 (Spanish Strategy for Science, Technology and Innovation) of the Ministry of Universities (MUNI) of the Spanish Government.  
J.A.B.P. was also supported by the Grant PID2023-146764NB-I00 funded by MICIU/AEI/10.13039/501100011033 and cofunded by the European Union and by the grant~IT1875-26 funded by the Basque Government.

%\tableofcontents
%%%%%%%%%%%%%%%%%%%%%%%%%%%%%%%%%%%%%%%%%%%%%%%%%%%%%%%%%%%%%%%%%%%
\section{Introduction}\label{sec_intro}
\setcounter{equation}{0}

Let $a\in L^\infty(\mathbb R)$ be a $1$-periodic function such that, for some constants $K>0$, we have
\begin{equation}\label{low_coef_c}
K^{-1}  \leqslant a(x)\leqslant K  \qquad(x\in \mathbb R\ {\rm a.e.}).
\end{equation}
Let $I\subset (0,1)$ be an open interval, let $\chi_I$ be its characteristic function, and fix
$z_0\in L^2(0,1)\setminus\{0\}$.
For $\varepsilon>0$, we consider the controlled equation
\begin{equation}\label{heat_osc_aux}
\frac{\partial z}{\partial t}(t,x) -\frac{\partial}{\partial x}\left(a\left(\frac{x}{\varepsilon}\right)\frac{\partial z}{\partial x}(t,x)
\right)=u(t,x) \chi_{I}(x) \qquad(t\geqslant 0,\ \ x\in (0,1)),
\end{equation}
\begin{equation}\label{bound_osc_aux}
z(t,0)=z(t,1)=0 \qquad\qquad(t\geqslant 0),
\end{equation}
\begin{equation}\label{init_osc_aux}
z(0,x)=z_0(x) \qquad\qquad(x\in (0,1)),
\end{equation}
and denote its solution by $z_\varepsilon(\cdot;z_0,u)$.

The index $\varepsilon=0$ will refer to the homogenized equation, in which
$a(x/\varepsilon)$ is replaced by the constant
\begin{equation*}\label{eq:homogenized-coefficient-introduction}
a_{0}
=\left(\int_0^1\frac{1}{a(y)}\,{\rm d}y\right)^{-1}.
\end{equation*}

We study two optimal control problems. First, for a prescribed final time
$\tau>0$, the $\varepsilon$-norm-optimal control problem is
\begin{equation}\label{eq:introduction-norm-optimal-problem}
N_\varepsilon(\tau)
=\inf\left\{
\|u\|_{L^\infty((0,\tau)\times I)}:\
z_\varepsilon(\tau;z_0,u)=0
\right\}.
\end{equation}
A control attaining this infimum will be denoted by $u_\varepsilon^\tau$.
Second, for a prescribed amplitude bound $M>0$, the
$\varepsilon$-time-optimal control problem is
\begin{equation}\label{eq:introduction-time-optimal-problem}
\tau_\varepsilon^*(M)
=\inf\left\{
\tau>0:\ \text{there exists }u\in L^\infty((0,\tau)\times I),\ 
\|u\|_{L^\infty((0,\tau)\times I)}\leqslant M,\ 
z_\varepsilon(\tau;z_0,u)=0
\right\}.
\end{equation}
An associated optimal control will be denoted by $u_\varepsilon^*(M)$.
The same notation with $\varepsilon=0$ refers to the homogenized equation.

%\textcolor{blue}{On a first approach, we may consider the operator $\frac{d}{dx}\left(\frac{1}{a}\frac{d}{dx}\right)$ instead, to ensure that we are working with a self-adjoint operator, which simplifies things. Alternatively, we could say that the operator is $a_\varepsilon A$, for $a_\varepsilon$ a continuous operator from $X$ to $X$.} 

Our first main result concerns the convergence of the solutions of the norm optimal problems.

\begin{theorem}\label{thm:introduction-norm-optimal}
Let $\tau>0$ and $z^0 \in L^2(0,1)$. For every $\varepsilon\geqslant0$, the norm-optimal problem
\eqref{eq:introduction-norm-optimal-problem} admits a unique solution
$u_\varepsilon^\tau$, which is bang-bang:
\begin{equation*}\label{eq:introduction-bang-bang-norm}
|u_\varepsilon^\tau(t,x)|=N_\varepsilon(\tau)
\quad\text{for almost every }(t,x)\in(0,\tau)\times I.
\end{equation*}
Moreover,
\begin{equation*}\label{eq:introduction-convergence-norm-values}
N_\varepsilon(\tau)\longrightarrow N_0(\tau),
\end{equation*}
and
\begin{equation*}\label{eq:introduction-convergence-norm-controls}
u_\varepsilon^\tau\xrightharpoonup{*}u_0^\tau
\quad\text{in }L^\infty((0,\tau)\times I),
\qquad
u_\varepsilon^\tau\longrightarrow u_0^\tau
\quad\text{in }L^p((0,\tau)\times I)
\end{equation*}
for every $1\leqslant p<\infty$, as $\varepsilon\to0$.
\end{theorem}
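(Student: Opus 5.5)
We argue by duality: for each fixed $\varepsilon\geqslant0$ we characterize $u_\varepsilon^\tau$ through an adjoint minimization problem, and then pass to the limit in that problem using observability constants that do not depend on $\varepsilon$.

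Let $A_\varepsilon=\partial_x(a(\cdot/\varepsilon)\partial_x)$ with Dirichlet conditions on $L^2(0,1)$. This operator is self-adjoint and negative, and we set $A_0=a_0\partial_{xx}$. For $\varphi_\tau\in L^2(0,1)$ let $\varphi_\varepsilon(t)=e^{(\tau-t)A_\varepsilon}\varphi_\tau$ be the adjoint state. We consider the functional
\begin{equation*}
J_\varepsilon(\varphi_\tau)=\frac12\Big(\int_0^\tau\!\!\int_I|\varphi_\varepsilon(t,x)|\,{\rm d}x\,{\rm d}t\Big)^2+\langle e^{\tau A_\varepsilon}z_0,\varphi_\tau\rangle .
\end{equation*}
By the standard $L^\infty$--$L^1$ duality for parabolic systems, the identity $N_\varepsilon(\tau)=\|\hat\varphi_\varepsilon\|_{L^1((0,\tau)\times I)}$ holds. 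Here $\hat\varphi_\varepsilon$ is the minimizer of $J_\varepsilon$, taken over the completion $X_\varepsilon$ of $L^2$ for the norm $\|\varphi_\varepsilon\|_{L^1((0,\tau)\times I)}$. The optimal control is then $u_\varepsilon^\tau=-N_\varepsilon(\tau)\,{\rm sign}(\hat\varphi_\varepsilon)$.

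\textbf{Step 1: existence, uniqueness and bang-bang property.} Existence of a minimizer follows from coercivity of $J_\varepsilon$. This in turn needs the final-state observability estimate $\|e^{\tau A_\varepsilon}\varphi_\tau\|\leqslant C\|\varphi_\varepsilon\|_{L^1((0,\tau)\times I)}$. Bang-bang and uniqueness both rest on a unique continuation fact: a nonzero adjoint state cannot vanish on a set of positive measure in $(0,\tau)\times I$. In one dimension this follows from analyticity in time of the eigenfunction expansion. The eigenfunctions of $A_\varepsilon$ are nontrivial solutions of a second order ODE, so they cannot vanish on a subset of $I$ of positive measure. Consequently $|u_\varepsilon^\tau|=N_\varepsilon(\tau)$ a.e.; if $z_0$ gives $N_\varepsilon(\tau)=0$ the statement is trivial. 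Uniqueness follows because the sign is determined a.e., together with strict convexity of the norm on controls saturating the bound. Alternatively, two optimal controls $u_1,u_2$ give the optimal control $(u_1+u_2)/2$, whose modulus is below $N$ wherever $u_1\neq u_2$, contradicting bang-bang.

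\textbf{Step 2: uniform bounds.} I would use the observability estimate uniform in $\varepsilon$ derived earlier in the paper, in the $L^1$ form $\|e^{\tau A_\varepsilon}\varphi\|_{L^2}\leqslant C_\tau\int_0^\tau\!\int_I|e^{tA_\varepsilon}\varphi|$ with $C_\tau$ independent of $\varepsilon$. It yields $N_\varepsilon(\tau)\leqslant C_\tau\|z_0\|$ uniformly in $\varepsilon$. Hence, along a subsequence, $N_\varepsilon(\tau)\to N^*$ and $u_\varepsilon^\tau\xrightharpoonup{*}u^*$ in $L^\infty$.

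\textbf{Step 3: identifying the limit.} Periodic homogenization in one dimension gives $e^{tA_\varepsilon}\to e^{tA_0}$ strongly in $L^2$, uniformly on compact time intervals; this is G-convergence with compactness. The Duhamel term $\int_0^\tau e^{(\tau-s)A_\varepsilon}\chi_Iu_\varepsilon(s)\,{\rm d}s$ then converges weakly to the homogenized one. To see this, test against $\psi$ and use strong convergence of the adjoint semigroup, which is the same operator by self-adjointness. It follows that $z_0(\tau;z_0,u^*)=0$, so $u^*$ is admissible and $N_0(\tau)\leqslant\|u^*\|_\infty\leqslant\liminf N_\varepsilon=N^*$.

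For the reverse inequality $\limsup N_\varepsilon\leqslant N_0$, I would take the homogenized optimal control $u_0^\tau$ and correct it. The state $z_\varepsilon(\tau;z_0,u_0^\tau)$ tends to $0$ in $L^2$. Since the semigroup is contractive, we may steer $z_\varepsilon$ to zero during $(\tau-\delta,\tau)$ with cost $C_\delta\|z_\varepsilon(\tau-\delta;\cdot)\|$, again by uniform null-controllability. An alternative is to compare $J_\varepsilon$ and $J_0$ and use $\Gamma$-convergence of $J_\varepsilon$ on a uniformly coercive family. The duality route is cleaner: $-N_\varepsilon^2/2=\min J_\varepsilon$, and $\limsup\min J_\varepsilon\leqslant J_0(\varphi)$ holds for every smooth $\varphi$ by strong semigroup convergence. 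This gives $\liminf N_\varepsilon\geqslant N_0$, while the $\liminf$ inequality for $J_\varepsilon$ via uniform coercivity gives the other direction. Thus $N^*=N_0(\tau)$ and $u^*$ is norm-optimal, so $u^*=u_0^\tau$ by uniqueness, and the whole family converges.

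\textbf{Step 4: strong convergence.} On $(0,\tau)\times I$ we have $\|u_\varepsilon^\tau\|_{L^2}^2=N_\varepsilon^2|(0,\tau)\times I|\to N_0^2|(0,\tau)\times I|=\|u_0^\tau\|_{L^2}^2$, using that both controls are bang-bang. Weak convergence plus convergence of norms gives strong $L^2$ convergence. Uniform $L^\infty$ bounds then yield convergence in $L^p$ for every $p<\infty$, by interpolation for $p>2$ and by Hölder on a bounded domain for $p<2$.

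The main obstacle is the $\limsup$ inequality, that is, the upper bound $\limsup N_\varepsilon\leqslant N_0$. It requires the observability constant, or equivalently the null-control cost on short intervals, to be uniform in $\varepsilon$. I rely on the paper's earlier uniform estimates for this.
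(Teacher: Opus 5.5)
\paragraph{The gap: the upper bound $\limsup_{\varepsilon\to0} N_\varepsilon(\tau)\leqslant N_0(\tau)$ is not proved.} This is the step you yourself flag as the main obstacle. Your primal construction keeps $u_0^\tau$ on $(0,\tau-\delta)$ and steers $z_\varepsilon(\tau-\delta;z_0,u_0^\tau)$ to zero on $(\tau-\delta,\tau)$, at cost $C_\delta\|z_\varepsilon(\tau-\delta;z_0,u_0^\tau)\|$.

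That state does not tend to zero. It converges to the homogenized optimal trajectory evaluated at $\tau-\delta$, which is nonzero. Indeed, if it were zero, the restriction of $u_0^\tau$ to $(0,\tau-\delta)$ would give $N_0(\tau-\delta)\leqslant N_0(\tau)$, contradicting strict decrease of $N_0$.

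Smallness of $z_\varepsilon(\tau)$ tells you nothing about $z_\varepsilon(\tau-\delta)$, and contractivity runs the wrong way. Superposing a correction on top of $u_0^\tau$ does not help either. It would require reaching the small target $-z_\varepsilon(\tau;z_0,u_0^\tau)$ from $0$ with cost bounded by its $L^2$ norm, an exact-controllability estimate that fails for the heat equation.

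The missing idea is to start from the homogenized optimal control for the \emph{shorter} horizon, $u_0^{\tau-\delta}$. Then:
\begin{itemize}
\item $r_\varepsilon=\mathbb T_{\tau-\delta}^\varepsilon z^0+\Phi_{\tau-\delta}^\varepsilon u_0^{\tau-\delta}\to0$ by Proposition~\ref{prop:semigroup-homogenization} and Lemma~\ref{lem:input-map-homogenization};
\item Lemma~\ref{lem:uniform-correction} removes $r_\varepsilon$ on $(\tau-\delta,\tau)$ at cost $C_\delta\|r_\varepsilon\|\to0$, so $\limsup N_\varepsilon(\tau)\leqslant N_0(\tau-\delta)$;
\item you conclude with the continuity of $N_0$ from Theorem~\ref{tm:normoptcon}, an ingredient your proposal never uses.
\end{itemize}

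\paragraph{The dual alternative does not close the gap.} The bound $\limsup_\varepsilon\min J_\varepsilon\leqslant J_0(\varphi)$ yields only $\liminf N_\varepsilon\geqslant N_0$, the direction you already had. The needed inequality $\liminf_\varepsilon\min J_\varepsilon\geqslant\min J_0$ does not follow from uniform coercivity. Coercivity only bounds the minimizers $\hat\varphi_\varepsilon$ in $L^1((0,\tau)\times I)$. The completions $X_\varepsilon$ are non-reflexive and depend on $\varepsilon$, and the adjoint states may concentrate near $t=\tau$. To make this route work you would need to:
\begin{itemize}
\item use uniform observability on each $(s,\tau)\times I$ to bound $\hat\varphi_\varepsilon(s)$ in $L^2$;
\item use the operator-norm homogenization estimate to obtain convergence in $C([0,s];L^2)$ for every $s<\tau$;
\item apply Fatou's lemma to the $L^1$ term;
\item show that the limit, a homogenized adjoint trajectory defined only on $[0,\tau)$, is still an admissible competitor for $J_0$ (for instance by a time-translation argument).
\end{itemize}
None of this is in the proposal.

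\paragraph{The same issue in Step 1.} Coercivity does not produce a minimizer of $J_\varepsilon$ in $X_\varepsilon$; this is a norm-attainment question in a non-reflexive space. There you can simply invoke Theorem~\ref{tm:normoptcon} through Proposition~\ref{prop:uniform-H1-H3}, as the paper does.

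Step 2, the lower-bound half of Step 3, and Step 4 coincide with the paper's argument.
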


The second main result concerns the convergence of the solutions of time optimal control problems.

\begin{theorem}\label{th_eps_fix_c}
For every $M>0$, every $z^0 \in L^2(0,1)$ and every $\varepsilon\geqslant0$, the time-optimal problem
\eqref{eq:introduction-time-optimal-problem} admits a unique optimal control
$u_\varepsilon^*(M)$. It is bang-bang:
\begin{equation*}\label{mare_bang_bang}
|u_\varepsilon^*(M)(t,x)|=M
\quad\text{for almost every }(t,x)\in
(0,\tau_\varepsilon^*(M))\times I.
\end{equation*}
There exists $\widehat\tau>0$, depending only on $M$, $|I|$, $K$ and
$\|z_0\|_{L^2(0,1)}$, such that
$\tau_\varepsilon^*(M)\leqslant\widehat\tau$ for every $\varepsilon\geqslant0$.
Furthermore,
\begin{equation*}\label{eq:introduction-convergence-optimal-times}
\tau_\varepsilon^*(M)\longrightarrow\tau_0^*(M).
\end{equation*}
After extending every $u_\varepsilon^*(M)$ by zero to
$(0,\widehat\tau)\times I$, these extensions converge weakly-* in
$L^\infty((0,\widehat\tau)\times I)$ and strongly in
$L^p((0,\widehat\tau)\times I)$, for every $1\leqslant p<\infty$, to the
corresponding extension of $u_0^*(M)$ as $\varepsilon\to0$.
\end{theorem}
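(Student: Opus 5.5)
The plan is to reduce Theorem~\ref{th_eps_fix_c} to Theorem~\ref{thm:introduction-norm-optimal} through the standard duality between time-optimal and norm-optimal problems. Fix $\varepsilon\geqslant0$. I first record the properties of $\tau\mapsto N_\varepsilon(\tau)$:
\begin{itemize}
\item it is finite for every $\tau>0$, by null-controllability from $I$;
\item it is nonincreasing, since a control steering $z_0$ to $0$ at time $\tau$ and extended by zero still steers to zero at any later time, the free dynamics preserving $0$;
\item it is strictly decreasing and continuous on $(0,\infty)$, and $N_\varepsilon(\tau)\to\infty$ as $\tau\to0^+$ whenever $z_0\neq0$.
\end{itemize}
Strict decrease follows from the bang-bang property in Theorem~\ref{thm:introduction-norm-optimal}. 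If $N_\varepsilon(\tau_1)=N_\varepsilon(\tau_2)$ with $\tau_1<\tau_2$, then the zero-extended optimal control on $(0,\tau_1)$ is optimal on $(0,\tau_2)$. It vanishes on $(\tau_1,\tau_2)\times I$, which contradicts bang-bang since $N_\varepsilon(\tau_2)>0$. Continuity is obtained by a weak-* compactness argument, using the observability-based characterization of $N_\varepsilon$ as the inverse of a dual minimum.

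From these properties I deduce that $\tau^*_\varepsilon(M)$ is the unique $\tau$ with $N_\varepsilon(\tau)=M$. The time-optimal control then coincides with $u_\varepsilon^{\tau^*_\varepsilon(M)}$. This gives existence, uniqueness and the bang-bang property $|u^*_\varepsilon(M)|=M$.

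For the uniform bound $\widehat\tau$ I use the uniform null-controllability estimate announced in the abstract. Say the cost satisfies $N_\varepsilon(\tau)\leqslant C(\tau)\|z_0\|_{L^2}$ for $\tau\in(0,1]$, with $C(\tau)$ depending only on $K$ and $|I|$. The strategy is to let the system evolve freely for a time $T$ and then control on $[T,T+1]$. Uniform ellipticity gives a Poincaré-based decay $\|z(T)\|\leqslant e^{-\lambda T}\|z_0\|$, with $\lambda\geqslant\pi^2/K$. The cost of this strategy is at most $C(1)e^{-\lambda T}\|z_0\|$, and I choose $T$ so that this is $\leqslant M$. Setting $\widehat\tau=T+1$ then gives $N_\varepsilon(\widehat\tau)\leqslant M$, and hence $\tau^*_\varepsilon(M)\leqslant\widehat\tau$, uniformly in $\varepsilon\geqslant0$.

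Convergence of the times comes next. Set $\tau_0^*=\tau^*_0(M)$ and fix $\delta>0$. Since $N_0$ is strictly decreasing, $N_0(\tau_0^*+\delta)<M<N_0(\tau_0^*-\delta)$ (when $\tau_0^*-\delta>0$). By the pointwise convergence $N_\varepsilon(\tau)\to N_0(\tau)$ from Theorem~\ref{thm:introduction-norm-optimal}, for $\varepsilon$ small we get $N_\varepsilon(\tau_0^*+\delta)<M<N_\varepsilon(\tau_0^*-\delta)$. Monotonicity of $N_\varepsilon$ then gives $|\tau^*_\varepsilon-\tau^*_0|<\delta$. The case $\tau^*_0\leqslant\delta$ needs only the upper inequality.

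For the controls, I extend by zero and denote the extensions $\tilde u_\varepsilon$. They are bounded by $M$ in $L^\infty((0,\widehat\tau)\times I)$. Take any weak-* limit $v$ along a subsequence. Passing to the limit in the state equation by homogenization (H-convergence of $a(x/\varepsilon)$ to $a_0$ in 1D, with weak convergence of sources), and using $\tau^*_\varepsilon\to\tau^*_0$, shows that $v$ vanishes on $(\tau^*_0,\widehat\tau)$, satisfies $\|v\|_\infty\leqslant M$, and steers $z_0$ to $0$ at time $\tau^*_0$. Uniqueness of the time-optimal control then forces $v=\tilde u_0$, so the whole family converges weakly-*.

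Strong $L^p$ convergence follows from the norm identity. We have $\|\tilde u_\varepsilon\|_{L^2}^2=M^2|I|\tau^*_\varepsilon\to M^2|I|\tau^*_0=\|\tilde u_0\|_{L^2}^2$. Weak convergence together with convergence of norms gives strong $L^2$ convergence, and the uniform $L^\infty$ bound then gives $L^p$ convergence for every finite $p$.

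The main obstacle is the passage to the limit in the state equation with simultaneously oscillating coefficients and only weakly converging controls. I expect to handle it by duality: test against adjoint solutions $\varphi_\varepsilon$ with fixed final data, which converge strongly in $L^2$ by homogenization. The final-state pairing is then a product of a weakly and a strongly convergent sequence. The same tool underlies the continuity of $N_\varepsilon$.
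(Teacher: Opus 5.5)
Your proposal is correct and follows the paper's proof essentially step for step. The shared steps are: the uniform $\widehat\tau$ from free Poincar\'e decay followed by a uniform null control on a unit interval; bracketing of $\tau_\varepsilon^*(M)$ via $N_\varepsilon\to N_0$ and strict monotonicity; weak-* compactness of the zero-extended controls, with the varying-final-time duality limit (exactly Lemma~\ref{lem:varying-time-terminal-limit}) and uniqueness; and the norm identity $\|\widetilde u_\varepsilon^*\|_{L^2}^2=M^2|I|\tau_\varepsilon^*(M)$ for strong convergence.

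The differences are minor. You re-derive strict decrease and the time--norm equivalence from uniqueness and bang-bang of norm-optimal controls, rather than quoting Theorem~\ref{tm:timeoptcon}. Producing $\tau$ with $N_\varepsilon(\tau)=M$ then needs the bound $N_\varepsilon(\widehat\tau)\leqslant M$, which you only establish in your later step. Continuity of $N_\varepsilon$ should simply be cited from Theorem~\ref{tm:normoptcon} rather than sketched. Finally, you invoke qualitative H-convergence where the paper uses the Meshkova--Suslina estimates.
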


The interaction between homogenization and controllability has been extensively studied over the last three decades. For hyperbolic equations, the pioneering work of Avellaneda, Bardos and Rauch \cite{AvellanedaBardosRauch} revealed that controllability properties may be strongly affected by rapidly oscillating coefficients. In the one-dimensional wave equation, Castro and Zuazua and, subsequently, Castro analyzed the asymptotic behavior of the spectrum and of the controls in heterogeneous media; see \cite{CastroZuazua1997,Castro1999,CastroZuazua2000}. Since the controls obtained by the Hilbert Uniqueness Method are controls of minimal $L^2$-norm, the convergence results in these works may also be interpreted as homogenization results for norm-optimal controls with the exact target $\{0\}$, in an $L^2$ setting. They also show that exact controllability need not be uniform with respect to the homogenization parameter. More recently, Lin and Shen \cite{LinShen2022} combined quantitative homogenization with spectral analysis to establish uniform boundary controllability for suitable low-frequency components of wave equations with rapidly oscillating periodic coefficients.

For parabolic equations, the situation is markedly different due to the dissipative nature of the dynamics. A fundamental contribution is the paper \cite{Lop_Zuaz} by López and Zuazua, where the one-dimensional heat equation with rapidly oscillating periodic density was considered. Using a three-step strategy combining low-frequency control, free evolution, and a final null-controllability argument based on Carleman estimates, they proved uniform null controllability and the convergence of minimal $L^2$ norm null controls toward those of the homogenized equation. This result highlighted a striking contrast with the hyperbolic case, where the corresponding controls may blow up as the oscillation scale tends to zero.

A major step toward the treatment of heterogeneous parabolic equations with low regularity coefficients was achieved by Alessandrini and Escauriaza  in \cite{alessandrini2008null}. They proved null controllability for a broad class of one-dimensional parabolic equations with merely measurable, uniformly elliptic coefficients. Their approach relies on quantitative unique continuation properties and yields observability estimates that are robust with respect to the regularity of the coefficients. From the perspective of homogenization, this robustness is particularly important since rapidly oscillating coefficients typically generate families of equations that are not uniformly regular in the oscillation parameter. The results of \cite{alessandrini2008null} therefore provide a natural framework for studying control problems in heterogeneous media and have influenced subsequent works on uniform observability, controllability from measurable sets, and asymptotic problems involving oscillatory coefficients.

Parallel to these developments, a substantial literature has emerged on bang-bang properties and optimal control problems for parabolic equations. The origins of the bang-bang principle can be traced back to the seminal work of Pontryagin and his collaborators \cite{Pontryagin}, where it was shown, within the framework of the maximum principle, that time-optimal controls typically saturate the admissible control constraints. Its extension to infinite-dimensional systems governed by partial differential equations is considerably more delicate. We refer to the monograph \cite{WangEtAl2018} for a systematic account of time-optimal control of evolution equations, including existence, maximum principles, relations between time- and norm-optimal problems, and bang-bang properties.

For parabolic equations, early results mainly concerned time-optimal controls in an $L^2$ setting. A particularly influential contribution is the work \cite{wang_linf} of Wang, who established suitable null-controllability properties for the heat equation and deduced a bang-bang principle for time-optimal controls with values in bounded subsets of $L^2(\Omega)$. The equivalence between minimal-time and minimal-norm null-control problems was studied by Wang and Zuazua \cite{WangZuazua}. Micu, Roventa and Tucsnak \cite{MRT} proved the bang-bang property for time-optimal boundary controls of the heat equation, while Kunisch and Wang \cite{KunischWang2013} investigated the heat equation with pointwise control constraints.

Stability and approximation of time-optimal parabolic controls have also received considerable attention. Yu \cite{Yu} studied perturbations of the potential in the heat equation. Tucsnak, Wang and Wu \cite{tucsnak2016perturbations} developed an abstract perturbation theory for parabolic generators and applied it to equations with rapidly oscillating coefficients. Their application is already a homogenization theorem for time-optimal controls: the controls are constrained in $L^\infty(0,\infty;L^2(\Omega))$, and the target is a closed ball of positive radius centered at the origin. More general stability questions for linear parabolic time-optimal problems and convex terminal sets were studied in \cite{BonifaciusPieper2019}. These works are closely related to the present one, but their control constraints and/or target sets differ from ours.

More recently, observability inequalities from measurable sets and quantitative unique continuation estimates have led to a unified approach to optimal control problems for parabolic equations. Within this framework, existence, uniqueness and bang-bang properties follow from suitable observability estimates. The recent work \cite{tucsnak2025norm} develops an abstract theory of norm-optimal and time-optimal controls in space--time $L^\infty$ for a broad class of linear parabolic systems.

The novelty of the present paper should therefore be understood in the following precise sense. We study homogenization when the admissible controls satisfy a pointwise constraint in space and time, that is, they belong to a ball of $L^\infty((0,\tau)\times I)$, and when the target is the single point $\{0\}$ rather than a ball with nonempty interior. In this setting we treat both norm-optimal and time-optimal controls and prove convergence to their homogenized counterparts. To the best of our knowledge, this combination of an exact null target and space--time $L^\infty$ constraints has not previously been considered in homogenization. Our proof combines the abstract framework of \cite{tucsnak2025norm}, uniform observability from measurable sets, and periodic homogenization. The exact target is important: unlike a ball of positive radius, it has empty interior, so the short correction furnished by uniform null controllability becomes an essential part of the convergence argument.

The paper is organized as follows.  Section
\ref{sec:spectral-observability} contains parameter-free spectral and
observability estimates, including observation on arbitrary measurable
sets of positive measure.  Section \ref{sec:abstract-time-optimal}
collects the abstract facts concerning norm- and time-optimal controls.
Section \ref{sec:operator-approximation} proves homogenization of the
norm-optimal values and controls. Section
\ref{sec:time-optimal-homogenization} then deduces the convergence of the
time-optimal values, controls and trajectories.

%%%%%%%%%%%%%%%%%%%%%%%%%%%%%%%%%%%%%%%%%%%%%%%%%%%%%%%%%%%%%%%%%%%%%%%%%%%%%%%%%%%%%%%%%%%%%%%%%%%%%%%%%%%%%%%%%%%%%%%%%%%%%%%%%%%%%%%%%%%%%%

In this paper, given a set $G$ in $\mathbb R^d$ for some $d\geqslant1$, $|G|$ denotes its Lebesgue measure.

\section{Final state observability for the heat equation with rough coefficients}
\label{sec:spectral-observability}

Let $a\in L^\infty(0,1)$ be real-valued and assume
that two fixed constants $a_0,a_1>0$ satisfy
\begin{equation}\label{eq:ellipticity-a}
a_0\leqslant a(x)\leqslant a_1
\qquad\text{for almost every }x\in(0,1).
\end{equation}
Consider the closed symmetric form
\begin{equation*}\label{eq:form-a}
\mathfrak a_a(v,w)=\int_0^1a(x)v'(x)\overline{w'(x)}\,{\rm d}x,
\qquad v,w\in H_0^1(0,1),
\end{equation*}
and denote by $A_a$ its associated strictly positive  operator. More precisely,
\[
\mathcal{D}(A_a)=\left\{\varphi\in H_0^1(0,1)\ \ |\ \ a \frac{{\rm d}\varphi}{{\rm d}x}\in H^1(0,1)\right\}
\]
\begin{equation}\label{eq:variable-coefficient-operator}
A_a v=-\frac{{\rm d}}{{\rm d}x}\left(a\frac{{\rm d}v}{{\rm d}x}\right) \qquad\qquad(v\in \mathcal{D}(A_a)).
\end{equation}
Since the resolvents of $A_a$ are obviously compact it follows that
there exists an orthonormal basis $(e_k^a)_{k\geqslant 1}$ of $L^2(0,1)$ 
formed of eigenvectors of $A_a$. We denote by $(\lambda_k^a)_{k\in \mathbb{N}}$
the corresponding sequence of eigenvalues.

\begin{proposition}\label{prop:L1-spectral-variable-coefficient}
For every $m\in(0,1]$, there exists $N=N(a_0,a_1,m)\geqslant 1$ such that,
for every $a$ satisfying \eqref{eq:ellipticity-a}, every measurable
$\mathcal O\subset(0,1)$ with $|\mathcal O|\geqslant m$, every $\mu\geqslant 1$,
and every $(b_k)\in l^2(\mathbb{C})$,
\begin{equation}\label{eq:L1-spectral-variable-coefficient}
\left(\sum_{\sqrt{\lambda_k^a}\leqslant\mu}|b_k|^2\right)^{1/2}
\leqslant N\exp(N\mu)
\left\|\sum_{\sqrt{\lambda_k^a}\leqslant\mu}b_k e_k^a
\right\|_{L^1(\mathcal O)}.
\end{equation}
\end{proposition}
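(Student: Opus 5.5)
We prove this spectral inequality on measurable sets by the standard route for rough one-dimensional coefficients: lift to an elliptic problem in two variables, use quantitative unique continuation for that elliptic problem, and then pass from open sets to measurable sets by a propagation-of-smallness argument. Write $\varphi=\sum_{\sqrt{\lambda_k^a}\leqslant\mu}b_ke_k^a$ and introduce the lifted function
\[
\Phi(x,y)=\sum_{\sqrt{\lambda_k^a}\leqslant\mu}b_k\,\frac{\sinh(\sqrt{\lambda_k^a}\,y)}{\sqrt{\lambda_k^a}}\,e_k^a(x),\qquad (x,y)\in(0,1)\times(-2,2).
\]
It solves the divergence-form elliptic equation $\partial_x(a\partial_x\Phi)+\partial_y^2\Phi=0$ with Dirichlet data on $x\in\{0,1\}$ and $\partial_y\Phi(\cdot,0)=\varphi$.

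\textbf{Step 1: bound $\Phi$ from above.} By orthonormality and $\sinh(s)/s\leqslant e^{s}$,
\[
\|\Phi\|_{H^1((0,1)\times(-2,2))}\leqslant Ce^{2\mu}\Big(\sum|b_k|^2\Big)^{1/2}.
\]
The constant depends only on $a_0,a_1$, through the equivalence $\mathfrak a_a(v,v)\simeq\|v'\|^2$.

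\textbf{Step 2: bound the data from below.} Since $\partial_y\Phi(\cdot,0)=\varphi$ and $\Phi(\cdot,0)=0$, we have
\[
\|\varphi\|_{L^2(0,1)}=\Big(\sum|b_k|^2\Big)^{1/2}.
\]
The estimate to prove therefore becomes an interpolation inequality: the Cauchy data of $\Phi$ on the segment $\{y=0\}$ restricted to $\mathcal O$ must control $\Phi$ on a smaller rectangle, with a Hölder-type exponent depending only on $a_0,a_1,m$.

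\textbf{Step 3: Hölder quantitative unique continuation from measurable boundary sets.} This is the main obstacle. The coefficient $a$ is merely $L^\infty$, so Carleman estimates are unavailable. In two dimensions, however, solutions of $\operatorname{div}(\mathcal A\nabla\Phi)=0$ with bounded measurable $\mathcal A=\operatorname{diag}(a,1)$ are quasiregular via the stream function, that is, $\Phi+i\tilde\Phi$ is a Beltrami map. Through the Stoilow factorisation one can reduce to holomorphic functions composed with $K$-quasiconformal homeomorphisms, whose distortion depends only on $a_0,a_1$. This is exactly the route of Alessandrini--Escauriaza \cite{alessandrini2008null}. Quasiconformal maps distort measure only polynomially (Astala/Gehring), so the image of $\mathcal O$ keeps measure bounded below in terms of $m$ and $a_0,a_1$. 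The classical Nadirashvili/Vessella estimate for analytic functions small on a set of positive measure then gives
\[
\|\Phi\|_{L^2((1/4,3/4)\times(-1,1))}\leqslant C\,\|\varphi\|_{L^1(\mathcal O)}^{\theta}\,\|\Phi\|_{H^1}^{1-\theta},
\]
with $C,\theta$ depending only on $(a_0,a_1,m)$. I would invoke \cite{alessandrini2008null} for this interpolation rather than reprove it.

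The $L^1$ norm on the right needs some care. The function $\Phi(\cdot,0)$ vanishes identically, so the Schwarz reflection $\Phi(x,-y)=-\Phi(x,y)$ holds. Only $\partial_y\Phi$ is small on $\mathcal O$, and it is the conjugate Beltrami component there. Hence the smallness is that of $|\nabla\Phi|$ on $\mathcal O\times\{0\}$, and since measurable-set propagation works in $L^1$ (via a Remez-type argument for harmonic functions), the $L^1$ norm is the natural one. A lost factor of $e^{C\mu}$ is harmless.

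\textbf{Step 4: close.} The interior rectangle must control $\sum|b_k|^2$. The Dirichlet condition gives the $x$-direction, and in $y$ we have
\[
\int_{-1}^1\frac{\sinh^2(\sqrt{\lambda}y)}{\lambda}\,{\rm d}y\geqslant c.
\]
Propagation of smallness across $(0,1)$ costs only constants depending on $a_0,a_1$, by chaining the same estimate with the boundary version near $x=0,1$. Combining Steps 1--3 gives
\[
\Big(\sum|b_k|^2\Big)^{1/2}\leqslant C\|\varphi\|_{L^1(\mathcal O)}^{\theta}\Big(e^{2\mu}\big(\textstyle\sum|b_k|^2\big)^{1/2}\Big)^{1-\theta}.
\]
Dividing through yields the claim with $N\exp(N\mu)$, where $N=\max\{C^{1/\theta},\,2(1-\theta)/\theta\}$. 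Uniformity over $a$, $\mathcal O$ and $\mu\geqslant1$ follows because every constant depends only on $a_0,a_1,m$.
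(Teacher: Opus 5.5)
There is a genuine gap in Step 3, and that step carries the whole content of the proposition. First, the interpolation you plan to quote is not in \cite{alessandrini2008null}. Neither that paper nor its measurable-set sequel \cite{apraiz2013null} propagates smallness from the \emph{trace} on the segment $\mathcal O\times\{0\}$; as the paper's Remark explains, the latter observes the extension on $\mathcal O\times(1/4,3/4)$.

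Second, the measure-distortion argument does not apply. $\mathcal O\times\{0\}$ is a planar null set, so Astala's area bound gives nothing, and its image under the quasiconformal map $\chi$ lies on a quasiarc. Nadirashvili/Vessella-type estimates for sets of positive measure are therefore unavailable. At best you get a lower bound on an $\alpha$-dimensional Hausdorff content of the image, from H\"older continuity of $\chi^{-1}$, which then needs a capacity or two-constants argument.

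Third, and more fundamentally, any such argument requires the quasiregular map \emph{itself} to be small on the set. With your odd lift, $\Phi(\cdot,0)=0$ and the stream function satisfies $\partial_x\tilde\Phi(\cdot,0)=-\varphi$. Hence $\Phi+i\tilde\Phi=-i\int_0^x\varphi$ on the line, which need not be small on a merely measurable $\mathcal O$. Smallness of $|\nabla f|$ on a line also cannot be pushed through $f=F\circ\chi$, since $D\chi\in L^p_{\mathrm{loc}}$ has no trace there. The ``Remez-type'' sentence does not address this, and the $L^1(\mathcal O)$ norm in your interpolation inequality is asserted rather than derived.

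The repair uses the $y$-independence of $\operatorname{diag}(a(x),1)$. Because of it, $\Psi=\partial_y\Phi=\sum b_k\cosh(\sqrt{\lambda_k^a}\,y)e_k^a$ solves the same equation, with $\Psi(\cdot,0)=\varphi$ and $\partial_y\Psi(\cdot,0)=0$. Its stream function can then be normalized to vanish on $\{y=0\}$, so the associated quasiregular map equals $\varphi$ there. What must be propagated is therefore \emph{sup}-smallness on a subset of the line of positive length.

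That propagation from subsets of a line is Zhu's result, and the paper does not reprove it. It quotes \cite[Proposition~A.1]{zhu2025propagation}, after odd reflection and periodization, as $\|g\|_{L^\infty(0,1)}\leqslant e^{C\mu}\|g\|_{L^\infty(E)}$ for $|E|\geqslant m_0$. The passage to $L^1$ is a Chebyshev level-set step, not a Remez argument. The set $E=\{x\in\mathcal O:\ |\varphi(x)|\leqslant 2\|\varphi\|_{L^1(\mathcal O)}/|\mathcal O|\}$ satisfies $|E|\geqslant m/2$ and $\|\varphi\|_{L^\infty(E)}\leqslant \frac{2}{m}\|\varphi\|_{L^1(\mathcal O)}$. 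Then $\|\varphi\|_{L^2}\leqslant\|\varphi\|_{L^\infty}$ finishes the proof.

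So your outer frame (Steps 1, 2 and 4) is harmless but unnecessary. What is missing is a correct trace-level propagation of smallness from a measurable subset of a line, together with the reduction from $L^1(\mathcal O)$ to $L^\infty(E)$.
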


\begin{proof}
We first deduce from \cite[Proposition~A.1]{zhu2025propagation} the
following $L^\infty$ spectral inequality. For every
$m_0>0$ there exists $C=C(a_0,a_1,m_0)\geqslant1$ such that
\begin{equation}\label{eq:Zhu-Linfty-spectral}
 \|g\|_{L^\infty(0,1)}
 \leqslant \exp(C\mu)\|g\|_{L^\infty(E)}
\end{equation}
for every measurable $E\subset(0,1)$ with $|E|\geqslant m_0$ and every
\[
 g\in\operatorname{span}\{e_k^a:\sqrt{\lambda_k^a}\leqslant\mu\}.
\]
The constant in \eqref{eq:Zhu-Linfty-spectral} is uniform with respect
to $a$ satisfying \eqref{eq:ellipticity-a}.

Inequality \eqref{eq:Zhu-Linfty-spectral} follows directly from
\cite[Proposition~A.1]{zhu2025propagation} by the standard reflection and
rescaling argument. More precisely, one extends $a$ evenly and the
Dirichlet eigenfunctions $e_k^a$ oddly across $0$ and $1$, and then extends
both functions $2$-periodically. The extended eigenfunctions satisfy the
same eigenvalue equation for the periodic operator
$-\partial_x(\widetilde a\,\partial_x)$ on
$\mathbb R/(2\mathbb Z)$. After rescaling this torus to the unit torus,
the eigenvalues are multiplied by $4$. We can thus apply \cite[Proposition~A.1]{zhu2025propagation},
where the constant depends only on the ellipticity parameter
$\max\{a_1,a_0^{-1}\}$ and on a positive lower bound for
the Lebesgue measure of $E$. This proves \eqref{eq:Zhu-Linfty-spectral} with
$C=C(a_0,a_1,m_0)$. 

We now pass from \eqref{eq:Zhu-Linfty-spectral} to the required
$L^1$ estimate.  Set
\[
f=\sum_{\sqrt{\lambda_k^a}\leqslant\mu}b_k e_k^a,
\qquad
S=\left(\sum_{\sqrt{\lambda_k^a}\leqslant\mu}|b_k|^2\right)^{1/2},
\]
 and define
\[
 E=\left\{x\in\mathcal O:\ |f(x)|\leqslant
 \frac{2}{|\mathcal O|}\|f\|_{L^1(\mathcal O)}\right\}.
\]
Its complement in $\mathcal O$ is
\[
 \mathcal O\setminus E
 =\left\{x\in\mathcal O:\ |f(x)|>
 \frac{2}{|\mathcal O|}\|f\|_{L^1(\mathcal O)}\right\}.
\]
The measure-theoretic form of Markov's inequality, also often called
Chebyshev's inequality, applied to the nonnegative function
$|f|\chi_{\mathcal O}$ gives (see \cite[Proposition~4.1]{Axler2020})
\[
 |\mathcal O\setminus E|
 \leqslant
 \frac{|\mathcal O|}{2\|f\|_{L^1(\mathcal O)}}
 \int_{\mathcal O}|f(x)|\,{\rm d}x
 =\frac{|\mathcal O|}{2},
\]
with the conclusion being immediate if
$\|f\|_{L^1(\mathcal O)}=0$. Consequently,
\[
 |E|\geqslant\frac{|\mathcal O|}{2}\geqslant\frac m2.
\]
Moreover, by the definition of $E$,
\[
 \|f\|_{L^\infty(E)}
 \leqslant
 \frac{2}{|\mathcal O|}\|f\|_{L^1(\mathcal O)}
 \leqslant
 \frac{2}{m}\|f\|_{L^1(\mathcal O)}.
\]
Applying \eqref{eq:Zhu-Linfty-spectral} with $m_0=m/2$ and combining it
with the preceding estimate therefore yields
\begin{equation}\label{eq:Zhu-L1-spectral}
 \|f\|_{L^\infty(0,1)}
 \leqslant \frac{2}{m}\exp(C\mu)\|f\|_{L^1(\mathcal O)},
 \qquad C=C(a_0,a_1,m).
\end{equation}
Finally, orthonormality of $(e_k^a)$ gives
\[
 S=\|f\|_{L^2(0,1)}\leqslant\|f\|_{L^\infty(0,1)}.
\]
Combining this with \eqref{eq:Zhu-L1-spectral} and increasing the constant
proves \eqref{eq:L1-spectral-variable-coefficient}.
\end{proof}

\begin{remark}\label{rem:comparison-Apraiz-Escauriaza-Zhu}
The argument above is closely related to the rough-coefficient
one-dimensional construction of Apraiz and Escauriaza
\cite{apraiz2013null}. After a bilipschitz change of variables and periodic
reflection, they introduce a two-dimensional elliptic extension and prove a
tensorized spectral inequality in which the extension is observed on
$\mathcal O\times(1/4,3/4)$. Combined with a Lebeau--Robbiano construction,
this yields null controllability from measurable sets. That tensorized
inequality does not directly give an estimate of the trace at $y=0$ in terms
of its values on $\mathcal O$, because the observation still involves the
full elliptic extension for $y\in(1/4,3/4)$. By contrast,
\cite[Proposition~A.1]{zhu2025propagation}, itself obtained from Zhu's
two-dimensional propagation of smallness from subsets of a line, gives
directly the periodic trace estimate used in
\eqref{eq:Zhu-Linfty-spectral}. The only additional steps required here are
the passage from Dirichlet to periodic boundary conditions by reflection and
the elementary level-set argument converting the $L^\infty$ estimate into
the $L^1(\mathcal O)$ inequality
\eqref{eq:L1-spectral-variable-coefficient}. This last norm is the natural
one for the duality with $L^\infty$ controls.
\end{remark}

We finish with the observability and controllability consequences.

\begin{corollary}\label{cor:variable-coefficient-observability}
Let $\tau>0$ and let $\mathcal D\subset(0,\tau)\times(0,1)$ be measurable, with
$|\mathcal D|>0$. There exists $K=K(a_0,a_1,\tau,\mathcal D)>0$, independent
of the particular coefficient $a$, such that
\begin{equation*}\label{eq:variable-coefficient-final-observability}
\|\exp(-\tau A_a)f\|_{L^2(0,1)}
\leqslant K\int_{\mathcal D}|\exp(-tA_a)f(x)|\,{\rm d}x\,{\rm d}t.
\end{equation*}
\end{corollary}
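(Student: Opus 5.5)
The plan is to follow the now standard route from a spectral inequality on measurable sets, namely Proposition~\ref{prop:L1-spectral-variable-coefficient}, to an $L^1$ observability inequality on a measurable space--time set. This goes through the Phung--Wang telescoping series method, or equivalently the abstract Apraiz--Escauriaza--Wang--Zhang scheme. We will track constants so that they depend only on $a_0,a_1,\tau$ and $\mathcal D$, never on $a$ itself.

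\textbf{Step 1: slicing $\mathcal D$ in time.} Write $\mathcal D_t=\{x:(t,x)\in\mathcal D\}$. By Fubini, the set
\[
E=\{t\in(0,\tau): |\mathcal D_t|\geqslant |\mathcal D|/(2\tau)\}
\]
has $|E|\geqslant |\mathcal D|/2>0$, since $|\mathcal D_t|\leqslant 1$. Set $m=|\mathcal D|/(2\tau)$. For $t\in E$, apply Proposition~\ref{prop:L1-spectral-variable-coefficient} with $\mathcal O=\mathcal D_t$. Let $P_\mu$ denote the spectral projection onto $\{\sqrt{\lambda_k^a}\leqslant\mu\}$. Combining the inequality with dissipation of the high modes, $\|(I-P_\mu)e^{-sA_a}g\|\leqslant e^{-\mu^2 s}\|g\|$, gives the interpolation estimate: for $0\leqslant\ell_1<\ell_2\leqslant\tau$, $t\in E\cap(\ell_1,\ell_2)$ and any $g$,
\[
\|e^{-tA_a}g\|_{L^2}\leqslant Ne^{N\mu}\|e^{-tA_a}g\|_{L^1(\mathcal D_t)}+Ne^{N\mu}e^{-\mu^2(t-\ell_1)}\|e^{-\ell_1A_a}g\|_{L^2}.
\]
To obtain this, split $e^{-tA_a}g=P_\mu(\cdot)+(I-P_\mu)(\cdot)$, and bound the $L^1(\mathcal D_t)$ norm of the high part by its $L^2$ norm. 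Here $N=N(a_0,a_1,m)$ is uniform in $a$.

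\textbf{Step 2: integrating and optimizing.} Integrate over $t\in E\cap(\ell_1,\ell_2)$, use monotonicity $\|e^{-\ell_2A_a}g\|\leqslant\|e^{-tA_a}g\|$, and optimize in $\mu$ using $N\mu-\mu^2 s\leqslant N^2/(4s)$-type bounds. This yields, for suitable $\ell_1<\ell_2$ chosen from a density point of $E$,
\[
\|e^{-\ell_2A_a}g\|\leqslant \frac{C e^{C/(\ell_2-\ell_1)}}{|E\cap(\ell_1,\ell_2)|}\int_{\mathcal D\cap((\ell_1,\ell_2)\times(0,1))}|e^{-tA_a}g|
+\delta\,\|e^{-\ell_1A_a}g\|
\]
with $\delta$ arbitrary, at the price of the constant.

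\textbf{Step 3: the telescoping argument.} Take a Lebesgue density point $\ell$ of $E$ and the standard geometric sequence $\ell_{j}=\ell+(\ell_0-\ell)q^{j}$ decreasing to $\ell$. This satisfies $|E\cap(\ell_{j+1},\ell_j)|\geqslant(\ell_j-\ell_{j+1})/3$ (this is the Phung--Wang lemma). Multiply the inequalities by weights $e^{-c/(\ell_j-\ell_{j+1})}$ and sum, so that the $L^2$ terms telescope. This bounds $\|e^{-\ell_0 A_a}f\|$, and hence $\|e^{-\tau A_a}f\|$ by contraction, with a constant $K$ depending only on $N$, $m$, $\tau$ and the density sequence of $E$. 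All of these depend only on $a_0,a_1,\tau,\mathcal D$.

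The main obstacle is Step 3. The telescoping works only if the exponent rate $\mu^2$ from dissipation beats the cost $e^{N\mu}$ quadratically, and the geometric ratio $q$ must be chosen close enough to $1$, depending on $N$, for the series to converge. Checking that $q$ and the density point depend only on $\mathcal D$ and $N$, and not on $a$, is what gives the uniformity claimed. Rather than redoing the computation, I would first try to invoke the abstract theorem: a spectral inequality $\|P_\mu g\|\leqslant Ne^{N\mu}\|P_\mu g\|_{L^1(\mathcal O)}$, uniform over all $|\mathcal O|\geqslant m$, implies $L^1$ observability on measurable $\mathcal D$ with a constant depending only on $N$, $m$ and $\mathcal D$. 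This is exactly the Apraiz--Escauriaza--Wang--Zhang result. Uniformity then follows from that of Proposition~\ref{prop:L1-spectral-variable-coefficient}.
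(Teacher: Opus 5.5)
Your proposal is correct and follows essentially the paper's route. The paper proves the corollary by combining Proposition~\ref{prop:L1-spectral-variable-coefficient} with the abstract Theorem~\ref{theo:poate_abs} of the Appendix. That theorem's proof is exactly your scheme: the good-time set $E$ defined by $|\mathcal D_t|\geqslant|\mathcal D|/(2\tau)$, a one-time interpolation from the spectral inequality plus high-frequency dissipation, integration over good times, and density-point telescoping with $q$ fixed by the constants, followed by contraction to reach $\tau$.

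The one detail you leave implicit is in Step~2. There you must integrate only over $E\cap\bigl(\ell_1+\tfrac16(\ell_2-\ell_1),\ell_2\bigr)$, which still has measure at least $\tfrac16(\ell_2-\ell_1)$ when $|E\cap(\ell_1,\ell_2)|\geqslant\tfrac13(\ell_2-\ell_1)$. This keeps $t-\ell_1$ comparable to $\ell_2-\ell_1$, so that $e^{-\mu^2(t-\ell_1)}$ beats $e^{N\mu}$; the paper does this with its point $t_3$.
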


\begin{proof}
Apply Theorem \ref{theo:poate_abs} from the Appendix and Proposition
\ref{prop:L1-spectral-variable-coefficient}.
\end{proof}

In order to formulate the dual controllability result, we now introduce the
reflection of the control set with respect to the midpoint of the time
interval:
\begin{equation*}\label{eq:time-reflected-set}
\mathcal D^\sharp=\{(\tau-t,x):(t,x)\in\mathcal D\}.
\end{equation*}
The map $(t,x)\mapsto(\tau-t,x)$ preserves Lebesgue measure. Consequently,
\begin{equation}\label{eq:measure-reflected-set}
|\mathcal D^\sharp|=|\mathcal D|.
\end{equation}
Moreover, if $\mathcal D_t=\{x\in(0,1):(t,x)\in\mathcal D\}$ denotes the
spatial section of $\mathcal D$ at time $t$, then
\begin{equation*}\label{eq:sections-reflected-set}
(\mathcal D^\sharp)_t=\mathcal D_{\tau-t}
\qquad\text{for almost every }t\in(0,\tau).
\end{equation*}

\begin{corollary}\label{cor:variable-coefficient-null-control}
Under the assumptions of Corollary
\ref{cor:variable-coefficient-observability}, for every
$z^0\in L^2(0,1)$ there exists
$u\in L^\infty(\mathcal D)$ such that the mild solution of
\begin{equation*}\label{eq:controlled-variable-coefficient-heat}
\left\{
\begin{aligned}
\frac{\partial z}{\partial t}(t,x)+(A_a z)(t,x)&=\chi_{\mathcal D}(t,x) u(t,x)
&& \qquad((t,x)\in(0,\tau)\times(0,1)),\\
z(t,0)=z(t,1)&=0&& \qquad(t\in(0,\tau)),\\
z(0,\cdot)&=z^0
\end{aligned}
\right.
\end{equation*}
satisfies $z(\tau,\cdot)=0$. Moreover,
\begin{equation}\label{eq:Linfty-control-cost}
\|u\|_{L^\infty(\mathcal D)}
\leqslant K(a_0,a_1,\tau,\mathcal D^\sharp)\|z^0\|_{L^2(0,1)}.
\end{equation}
For $\mathcal D=(0,\tau)\times\mathcal O$, $|\mathcal O|\geqslant m$, one may take
\begin{equation}\label{eq:cylindrical-Linfty-control-cost}
\|u\|_{L^\infty((0,\tau)\times\mathcal O)}
\leqslant C\exp\left(\frac{C}{\tau}\right)\|z^0\|_{L^2(0,1)},
\qquad C=C(a_0,a_1,m).
\end{equation}
\end{corollary}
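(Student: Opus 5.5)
The plan is the standard duality (Hilbert Uniqueness / Hahn--Banach) argument, turning the $L^1$-observability of Corollary \ref{cor:variable-coefficient-observability} into an $L^\infty$ control bound. Write $S(t)=\exp(-tA_a)$; it is self-adjoint because $A_a$ is symmetric. The mild solution at time $\tau$ is
\[
z(\tau)=S(\tau)z^0+\int_0^\tau S(\tau-s)\bigl(\chi_{\mathcal D}u\bigr)(s)\,{\rm d}s .
\]
So $z(\tau)=0$ means $-S(\tau)z^0$ lies in the range of the operator $\Phi_\tau u=\int_0^\tau S(\tau-s)\chi_{\mathcal D}(s,\cdot)u(s)\,{\rm d}s$, viewed as a bounded map from $L^\infty(\mathcal D)$ into $L^2(0,1)$.

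The first step is to identify the adjoint. For $\varphi\in L^2(0,1)$, $\langle \Phi_\tau u,\varphi\rangle$ equals $\int_{\mathcal D}u(s,x)\,\overline{(S(\tau-s)\varphi)(x)}\,{\rm d}x\,{\rm d}s$. The substitution $t=\tau-s$ maps $\mathcal D$ onto $\mathcal D^\sharp$, by \eqref{eq:measure-reflected-set} and the description of its sections. Hence the relevant dual quantity is
\[
\|\Phi_\tau^*\varphi\|_{L^1(\mathcal D)}=\int_{\mathcal D^\sharp}|S(t)\varphi(x)|\,{\rm d}x\,{\rm d}t .
\]
Applying Corollary \ref{cor:variable-coefficient-observability} with $\mathcal D^\sharp$ in place of $\mathcal D$ gives
\[
\|S(\tau)\varphi\|_{L^2}\leqslant K(a_0,a_1,\tau,\mathcal D^\sharp)\,\|\Phi_\tau^*\varphi\|_{L^1(\mathcal D)} .
\]

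The second step builds the control. I define a linear functional on the subspace $\{\Phi_\tau^*\varphi:\varphi\in L^2\}\subset L^1(\mathcal D)$ by
\[
\Lambda(\Phi_\tau^*\varphi)=-\langle z^0,S(\tau)\varphi\rangle .
\]
It is well defined: if $\Phi_\tau^*\varphi=0$, observability forces $S(\tau)\varphi=0$. It is also bounded, with $|\Lambda|\leqslant K\|z^0\|_{L^2}\,\|\Phi_\tau^*\varphi\|_{L^1}$. I extend it by Hahn--Banach to all of $L^1(\mathcal D)$ with the same norm. Since $L^1(\mathcal D)^*=L^\infty(\mathcal D)$ (a $\sigma$-finite measure space), the extension is represented by some $u\in L^\infty(\mathcal D)$ with $\|u\|_\infty\leqslant K\|z^0\|$. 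Unwinding the definitions, $\langle \Phi_\tau u+S(\tau)z^0,\varphi\rangle=0$ for all $\varphi$, so $z(\tau)=0$; this is \eqref{eq:Linfty-control-cost}.

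The main care point is tracking complex conjugates and the reflection $\mathcal D\mapsto\mathcal D^\sharp$ consistently in this pairing. One could instead minimise the convex functional $\tfrac12\|\Phi^*_\tau\varphi\|_{L^1}^2+\mathrm{Re}\langle z^0,S(\tau)\varphi\rangle$, but that functional may lack coercivity on $L^2$, which is why I prefer Hahn--Banach.

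For the cylindrical case $\mathcal D=(0,\tau)\times\mathcal O$ we have $\mathcal D^\sharp=\mathcal D$, so only the explicit form of $K$ is needed. Here I would go back to Theorem \ref{theo:poate_abs} and combine it with Proposition \ref{prop:L1-spectral-variable-coefficient}. The spectral constant there is $N\exp(N\mu)$ with $N$ depending only on $(a_0,a_1,m)$, and it holds uniformly for all time-sections $\mathcal O$. For such a spectral inequality, a Lebeau--Robbiano type abstract result typically gives a final-state observability cost $C\exp(C/\tau)$. The delicate point is that I am relying on the appendix theorem producing exactly this $\tau$-dependence in the cylindrical case. If it only provides an unspecified constant, I would redo the standard telescoping over dyadic time intervals. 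On each interval one uses low-frequency observability with cost $e^{N\mu}$ and high-frequency decay $e^{-\mu^2 t}$, with the cutoff $\mu_j$ chosen proportional to $2^j/\tau$. Summing the resulting geometric series gives a cost $C e^{C/\tau}$, which yields \eqref{eq:cylindrical-Linfty-control-cost}.
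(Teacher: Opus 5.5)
Your proposal is correct and follows the paper's own route. The paper likewise identifies the adjoint of the input-to-state map as $f\mapsto\chi_{\mathcal D}\exp(-(\tau-t)A_a)f$, applies Corollary~\ref{cor:variable-coefficient-observability} on $\mathcal D^\sharp$, and concludes by Hahn--Banach duality between $L^1(\mathcal D)$ and $L^\infty(\mathcal D)$, noting only that $\mathcal D^\sharp=\mathcal D$ in the cylindrical case. Your concern about the explicit $C\exp(C/\tau)$ dependence is fair, since the paper does not spell it out, but no separate dyadic construction is needed: for $\mathcal D=(0,\tau)\times\mathcal O$ the appendix's good-time set is $E=(0,\tau)$, so taking $\ell=0$, $\ell_1=\tau$ in its telescoping step gives a constant of exactly that form.
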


\begin{proof}
The adjoint of the input-to-state map at time $\tau$ is
\[
f\longmapsto\chi_{\mathcal D}(t,x)
\bigl(\exp(-(\tau-t)A_a)f\bigr)(x).
\]
Apply Corollary \ref{cor:variable-coefficient-observability} to
$\mathcal D^\sharp$ and use Hahn--Banach duality  between
$L^1(\mathcal D)$ and $L^\infty(\mathcal D)$ (see, for instance, \cite[Proposition 2.2]{micu2012time}). Conversely, the control
estimate implies the adjoint observability inequality. Thus the optimal
final-state observability constant for $\mathcal D^\sharp$ equals the
optimal $L^\infty$ null-control cost for $\mathcal D$.

Notice that when $\mathcal D=(0,\tau)\times\mathcal O$, one has
$\mathcal D^\sharp=\mathcal D$. Hence no time reflection appears in the
cylindrical estimate \eqref{eq:cylindrical-Linfty-control-cost}.
\end{proof}

\begin{remark}\label{rem:cost-dependence-variable-coefficient}
For a general measurable space--time set $\mathcal D$, the constant produced
by the proof depends on the distribution in time of the sections
$\mathcal D_t$, and not merely on $|\mathcal D|$. Once $a_0$, $a_1$, $\tau$,
and $\mathcal D$ are fixed, however, both the observability constant and the
control cost are uniform with respect to $a$. Although
\eqref{eq:measure-reflected-set} shows that $\mathcal D$ and
$\mathcal D^\sharp$ have the same measure, the telescoping construction for
a general space--time set also uses the distribution in time of its spatial
sections. Time reflection reverses this distribution, and the proof does not
by itself identify the particular constants produced for $\mathcal D$ and
$\mathcal D^\sharp$. This is why the constant associated with
$\mathcal D^\sharp$ is retained in \eqref{eq:Linfty-control-cost}.
\end{remark}

%%%%%%%%%%%%%%%%%%%%%%%%%%%%%%%%%%%%%%%%%%%%%%%%%%%%%%%%%%%%%%%%%%%%%%%%%%%%%%%%%%%%%%%%%%%%%%%%%%%%%%%%%%%%%%%%%%%%%%%%%%%%%%%%%%%%%%%%%%%

\section{Abstract results on norm- and time-optimal controls}
\label{sec:abstract-time-optimal}

In this section we recall the abstract framework and the consequences of
\cite{tucsnak2025norm} that will be used below. Let $X$ and $U$ be Hilbert
spaces, let $A:\mathcal D(A)\to X$ be such that $-A$ generates a strongly continuous semigroup
$(\mathbb T_t)_{t\geqslant0}$ on $X$, and let $B\in\mathcal L(U,X)$. We
consider the control system
\begin{equation*}\label{eq:abstract-differential-system}
 \dot z(t)+Az(t) = Bu(t),
 \qquad z(0)=z^0.
\end{equation*}
Its mild solution is
\begin{equation*}\label{eq:abstract-controlled-system}
 z(t)=\mathbb T_tz^0+\Phi_tu,
\end{equation*}
where the family $(\Phi_t)_{t>0}$ of input maps is defined by
\begin{equation*}\label{eq:abstract-input-maps}
 \Phi_t\in\mathcal L(L^2(0,t;U),X),
 \qquad
 \Phi_t u=\int_0^t\mathbb T_{t-s}Bu(\sigma)\,{\rm d}\sigma.
\end{equation*}
Since $B$ is bounded, these maps are well defined. Their adjoints are given
by
\begin{equation*}\label{eq:abstract-adjoint-input-maps}
 (\Phi_t^*f)(s)=B^*\mathbb T_{t-s}^*f
 \qquad(0<s<t,\ f\in X).
\end{equation*}
When controls are regarded as functions on $(0,\infty)$, both $u$ and
$\Phi_t^*f$ are extended by zero outside $(0,t)$.

We shall use the following assumptions.
\begin{enumerate}
\item[{\rm(H1)}] $U=L^2(\mathcal O)$, where $\mathcal O$ is a measurable
set of positive measure in $\mathbb R^d$ for some $d\in\mathbb N$.
\item[{\rm(H2)}] The semigroup $(\mathbb T_t)_{t\geqslant0}$ is analytic
and its generator $-A$ has compact resolvent.
\item[{\rm(H3)}] For every $\tau>0$ and every measurable set
$\mathcal D\subset(0,\tau)\times\mathcal O$ of positive measure, there is
$K_{\tau,\mathcal D}>0$ such that
\begin{equation*}\label{eq:abstract-H3}
 \|\mathbb T_\tau^*f\|_X
 \leqslant K_{\tau,\mathcal D}
 \int_{\mathcal D}|(B^*\mathbb T_{\tau-t}^*f)(x)|\,{\rm d}x\,{\rm d}t
 \qquad(f\in X).
\end{equation*}
\end{enumerate}

Fix $z^0\in X\setminus\{0\}$. For $\tau>0$, define the norm-optimal
value
\begin{equation}\label{eq:abstract-norm-optimal-value}
 N(\tau)=\inf\left\{
 \|u\|_{L^\infty((0,\tau)\times\mathcal O)}:
 \mathbb T_\tau z^0+\Phi_\tau u=0
 \right\}.
\end{equation}
The following result collects the part of
\cite[Theorems 3.2 and 3.3]{tucsnak2025norm} needed in this paper.

\begin{theorem}\label{tm:normoptcon}
Assume {\rm(H1)--(H3)}. For every $\tau>0$, the infimum in
\eqref{eq:abstract-norm-optimal-value} is attained by a unique control
$u^\tau$. Moreover, this control is bang-bang:
\begin{equation*}\label{eq:abstract-bang-bang-norm}
 |u^\tau(t,x)|=N(\tau)
 \quad\text{for almost every }(t,x)\in(0,\tau)\times\mathcal O.
\end{equation*}
The function $N:(0,\infty)\to(0,\infty)$ is continuous and strictly
decreasing. If $(\mathbb T_t)_{t\geqslant0}$ is exponentially stable, then
\begin{equation*}\label{eq:abstract-N-decay}
 \lim_{\tau\to\infty}N(\tau)=0.
\end{equation*}
\end{theorem}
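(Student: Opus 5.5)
The plan is to argue by duality between $L^1$ observations and $L^\infty$ controls. Existence, the bang-bang property and uniqueness then follow from the structure of a dual minimization problem; continuity and monotonicity of $N$ come from comparing different horizons.

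\textbf{Step 1: existence.} Hypothesis (H3) with $\mathcal D=(0,\tau)\times\mathcal O$, combined with the Hahn--Banach duality argument already used for Corollary \ref{cor:variable-coefficient-null-control}, gives a control $u\in L^\infty((0,\tau)\times\mathcal O)$ with $\mathbb T_\tau z^0+\Phi_\tau u=0$. Hence $N(\tau)<\infty$. Since $z^0\neq0$ and $\mathbb T_\tau$ is injective (by analyticity, or directly from (H3)), we have $N(\tau)>0$. Take a minimizing sequence, which is bounded in $L^\infty$. Extract a weak-$*$ convergent subsequence and use that $\Phi_\tau$ is weak-$*$ to weak continuous, since $\Phi_\tau^*f\in L^1$. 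The limit is admissible, so a minimizer exists.

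\textbf{Step 2: dual problem and bang-bang.} Consider the convex functional
\[
J(f)=\tfrac12\Bigl(\int_0^\tau\!\!\int_{\mathcal O}|(\Phi_\tau^*f)(t,x)|\,{\rm d}x\,{\rm d}t\Bigr)^2+\langle \mathbb T_\tau z^0,f\rangle.
\]
It is defined on the completion of $X$ modulo the observation seminorm. (H3) makes it coercive, with the final state controlled by the observation.

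Let $\hat f$ be the minimizer. The Euler--Lagrange relation then yields
\[
u^\tau=-\|\Phi_\tau^*\hat f\|_{L^1}\,\operatorname{sign}(\Phi_\tau^*\hat f)
\]
as an optimal control.

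To conclude $|u^\tau|=N(\tau)$ a.e., one needs $\Phi_\tau^*\hat f\neq0$ almost everywhere. For genuine $f\in X$ this follows from (H3) applied to the zero set $\mathcal D=\{\Phi_\tau^*f=0\}$: if that set had positive measure, (H3) would force $\mathbb T_\tau^*f=0$, hence $f=0$ by backward uniqueness.

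Uniqueness follows from the same observation. If $u_1,u_2$ are both optimal, then $(u_1+u_2)/2$ is optimal, hence bang-bang. This forces $u_1=u_2$ a.e.

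\textbf{Step 3: monotonicity, continuity and decay.} Strict decrease: extending $u^{\tau}$ by letting the system run freshly does not directly work. Instead, start with zero control on $(0,\delta)$ and then use $N(\tau)$-type controls from $\mathbb T_\delta z^0$. The key point is that a control which vanishes on a set of positive measure cannot be optimal, because it is not bang-bang; hence $N(\tau+\delta)<N(\tau)$.

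Continuity: use the weak-$*$ compactness argument of Step 1 across varying $\tau$ for lower semicontinuity. For the other direction, correct the small final-state discrepancy using controls with cost estimated via (H3) on short intervals.

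Decay: with exponential stability, let the system evolve freely for time $\tau-1$, then control $\mathbb T_{\tau-1}z^0$ on the last unit interval. This costs at most $K\|\mathbb T_{\tau-1}z^0\|\to0$.

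\textbf{Main obstacle.} I expect the hardest step to be the bang-bang property in Step 2. The minimizer $\hat f$ generally lives only in the completed space, not in $X$, so the zero-set argument must be extended to limits. For this I would use analyticity (H2): write $\Phi_\tau^*\hat f$ on $(0,\tau-\eta)$ as $B^*\mathbb T^*_{\tau-t-\eta}g_\eta$ with $g_\eta\in X$, apply the zero-set argument on each such interval, and then let $\eta\to0$.
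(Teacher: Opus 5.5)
The paper does not prove this statement. It quotes it from \cite[Theorems~3.2 and~3.3]{tucsnak2025norm}, so your argument has to stand on its own. Step~1, the decay argument and the weak-$*$ (lower semicontinuity) half of continuity are fine.

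\textbf{Step~2 has a genuine gap: the dual minimizer.} You write ``let $\hat f$ be the minimizer'' of $J$ on the completion of $X$ for $f\mapsto\|\Phi_\tau^*f\|_{L^1}$, and you justify it by coercivity. That completion is isometric to the closure of $\Phi_\tau^*X$ in $L^1((0,\tau)\times\mathcal O)$, and there is no reason for it to be reflexive. So convexity, continuity and coercivity do not produce a minimizer. You need an extra compactness step:
\begin{itemize}
\item (H3) at every horizon $\sigma\leqslant\tau$ gives $\|\mathbb T_\sigma^*f_n\|_X\leqslant K_\sigma\|\Phi_\tau^*f_n\|_{L^1}$.
\item Along a minimizing sequence, extract diagonally weak limits $g_\eta$ of $\mathbb T_\eta^*f_n$ in $X$ and build the limit object from them.
\item Use weak lower semicontinuity of the $L^1$ norm on $(0,\tau-\eta)\times\mathcal O$, then let $\eta\to0$.
\end{itemize}
So the $g_\eta$-representation from your last paragraph is needed already for existence, not only for the zero-set argument.

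\textbf{Your uniqueness argument is incomplete.} It assumes that \emph{every} optimal control is bang-bang, but you only showed this for the Euler--Lagrange control. The missing link is that every admissible $u$ satisfies $\langle u,\Phi_\tau^*\hat f\rangle=\|\Phi_\tau^*\hat f\|_{L^1}^2$. Equality in H\"older's inequality then forces $u=\|\Phi_\tau^*\hat f\|_{L^1}\operatorname{sign}(\Phi_\tau^*\hat f)$; note your minus sign is wrong.

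\textbf{All of Step~2 can be bypassed.} Since (H3) holds for \emph{every} measurable $\mathcal D$, the Hahn--Banach argument gives $v$ supported in $\mathcal D$ with $\Phi_\tau v=-\mathbb T_\tau z^0$ and $\|v\|_{L^\infty}\leqslant K_{\tau,\mathcal D}\|z^0\|_X$. Suppose an optimal $u$ had $|u|\leqslant N(\tau)-\epsilon$ on such a $\mathcal D$. Then for $\lambda<1$ close to $1$, $\lambda u+(1-\lambda)v$ is admissible with norm $<N(\tau)$, a contradiction. Hence every optimal control is bang-bang, and your midpoint argument then gives uniqueness.

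\textbf{Your strict-monotonicity step is not justified as written.} Using zero control on $(0,\delta)$ and then a norm-optimal control for $\mathbb T_\delta z^0$ on an interval of length $\tau$ only bounds $N(\tau+\delta)$ by the cost of steering $\mathbb T_\delta z^0$ in time $\tau$. Nothing you have shown says this is at most $N(\tau)$, the cost for $z^0$. By duality it involves the shifted window $\int_\delta^{\tau+\delta}\|B^*\mathbb T_s^*f\|_{L^1(\mathcal O)}\,{\rm d}s$, and $s\mapsto\|B^*\mathbb T_s^*f\|_{L^1(\mathcal O)}$ need not be monotone for a localized observation.

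The construction you dismissed is the one that works: extend $u^\tau$ by zero on $(\tau,\tau+\delta)$. The state stays at $0$, so $N(\tau+\delta)\leqslant N(\tau)$. Equality would make this control optimal, yet it vanishes on a set of positive measure and so is not bang-bang.

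\textbf{The upper half of continuity needs care.} A terminal error that is merely small in $X$ cannot in general be removed by a small control on a short final interval, because the range of $\Phi_\delta$ is not all of $X$. You need the error in the form $\mathbb T_\delta r$ with $r$ small. For horizon $\tau-h$, the shifted control $u^\tau(\cdot+h)$ works. Its terminal error is $\mathbb T_{\tau-h}r_h$ with $r_h=z^0-\mathbb T_hz^0-\Phi_h\bigl(u^\tau|_{(0,h)}\bigr)\to0$, and this can be nulled within the horizon at cost $O(\|r_h\|_X)$.
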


For $M>0$, define
\begin{equation*}\label{eq:abstract-time-optimal-value}
 \tau^*(M)=\inf\left\{
 \tau>0:\ \text{there exists }u\in L^\infty((0,\tau)\times\mathcal O),
 \ \|u\|_{L^\infty}\leqslant M,
 \ \mathbb T_\tau z^0+\Phi_\tau u=0
 \right\}.
\end{equation*}
The corresponding time-optimal result is
\cite[Theorem 3.4]{tucsnak2025norm}.

\begin{theorem}\label{tm:timeoptcon}
Assume {\rm(H1)--(H3)} and
$M>\lim_{\tau\to\infty}N(\tau)$. Then
\begin{equation}\label{eq:abstract-time-norm-relation}
 N(\tau^*(M))=M.
\end{equation}
The time-optimal problem admits a unique optimal control $u^*$, and
\begin{equation*}\label{eq:abstract-bang-bang-time}
 |u^*(t,x)|=M
 \quad\text{for almost every }(t,x)\in(0,\tau^*(M))\times\mathcal O.
\end{equation*}
\end{theorem}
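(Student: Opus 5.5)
The plan is to derive Theorem \ref{tm:timeoptcon} from Theorem \ref{tm:normoptcon} through the monotonicity and continuity of $N$. Set $N_\infty=\lim_{\tau\to\infty}N(\tau)$; this limit exists because $N$ is strictly decreasing, and by hypothesis $M>N_\infty$.

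\textbf{Step 1: finiteness of $\tau^*(M)$.} Since $M>N_\infty$, there is $\tau_1$ with $N(\tau_1)<M$. The norm-optimal control $u^{\tau_1}$ from Theorem \ref{tm:normoptcon} is then admissible, so $\tau^*(M)\leqslant\tau_1<\infty$. If $\tau^*(M)$ were $0$, we would have admissible times $\tau_n\to0$, hence $N(\tau_n)\leqslant M$. But $N(\tau)\to\infty$ as $\tau\to0$: indeed $\mathbb T_\tau z^0\to z^0\neq0$, while $\|\Phi_\tau u\|_X\leqslant C\sqrt\tau\,|\mathcal O|^{1/2}\|u\|_{L^\infty}$. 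Hence $\tau^*:=\tau^*(M)\in(0,\infty)$.

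\textbf{Step 2: $N(\tau^*)=M$.} Take admissible times $\tau_n\downarrow\tau^*$. For each $n$, $N(\tau_n)\leqslant M$, and continuity of $N$ gives $N(\tau^*)\leqslant M$. Conversely, suppose $N(\tau^*)<M$. By continuity there is $\tau<\tau^*$ with $N(\tau)<M$, and the control $u^\tau$ is admissible at time $\tau$. This contradicts the definition of $\tau^*$, so \eqref{eq:abstract-time-norm-relation} holds.

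\textbf{Step 3: existence, uniqueness and the bang-bang property.} By Step 2 and Theorem \ref{tm:normoptcon}, $u^{\tau^*}$ is admissible at time $\tau^*$ and bang-bang with modulus $N(\tau^*)=M$; this gives existence. Now let $v$ be any control satisfying $\|v\|_{L^\infty}\leqslant M$ and $\mathbb T_{\tau^*}z^0+\Phi_{\tau^*}v=0$. Then $\|v\|_{L^\infty}\leqslant M=N(\tau^*)$, so $v$ attains the infimum in \eqref{eq:abstract-norm-optimal-value} at $\tau=\tau^*$. Uniqueness of the norm-optimal control then forces $v=u^{\tau^*}$, which proves both uniqueness and the bang-bang identity.

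The main obstacle is the attainment of the infimum in the first place. The argument above sidesteps it by relying on continuity of $N$, which is part of Theorem \ref{tm:normoptcon}. The only extra ingredient is the blow-up $N(\tau)\to\infty$ as $\tau\to0$, needed to exclude $\tau^*=0$. If that blow-up were delicate, I would instead note that the admissibility set at time $\tau_n$ is weak-$*$ closed. Extending controls by zero and passing to a weak-$*$ limit gives a control $v$ with $\|v\|\leqslant M$. Then $\mathbb T_{\tau_n}z^0+\Phi_{\tau_n}u_n\to\mathbb T_{\tau^*}z^0+\Phi_{\tau^*}v$ weakly in $X$, and the strong continuity of the semigroup handles the variation of the time.
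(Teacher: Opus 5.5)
Your argument is correct. The paper itself gives no proof of this statement; it imports it from \cite[Theorem~3.4]{tucsnak2025norm}, exactly as it does Theorem~\ref{tm:normoptcon}. You instead show that it is a direct corollary of Theorem~\ref{tm:normoptcon}, in the spirit of the classical equivalence between minimal-time and minimal-norm problems (\cite{WangZuazua}):
\begin{itemize}
\item the hypothesis $M>\lim_{\tau\to\infty}N(\tau)$ produces an admissible time;
\item the elementary small-time bound on $\Phi_\tau$ rules out $\tau^*(M)=0$ (this bound uses $|\mathcal O|<\infty$, which is implicit in the framework and holds for $\mathcal O=I$);
\item continuity of $N$ at the interior point $\tau^*(M)$ forces $N(\tau^*(M))=M$;
\item any control of norm at most $M$ steering to zero at $\tau^*(M)$ is then norm-optimal, so existence, uniqueness and the bang-bang property are inherited from the norm-optimal problem.
\end{itemize}

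The citation buys brevity. Your derivation buys transparency about the logical dependencies. It shows that the time-optimal facts used in Section~\ref{sec:time-optimal-homogenization} need nothing from (H1)--(H3) beyond what already enters Theorem~\ref{tm:normoptcon}. It also shows that strict monotonicity of $N$ plays no role in proving $N(\tau^*(M))=M$. Monotonicity is needed only for the existence of $\lim_{\tau\to\infty}N(\tau)$, and later to identify $\tau^*(M)$ as the unique root of $N(\tau)=M$ in the bracketing argument of Section~\ref{sec:time-optimal-homogenization}. Your fallback weak-$*$ compactness argument is unnecessary but harmless.
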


The two last results will play a central role in the proof of our main theorems. More precisely,
the convergence of norm-optimal controls for oscillating coefficients is based on the existence,
uniqueness and the bang-bang property from Theorem \ref{tm:normoptcon}, as
well as the continuity of $N$. The convergence of time-optimal controls
uses, in addition, the fact that $N$ is strictly decreasing, the identity
\eqref{eq:abstract-time-norm-relation}, together with uniqueness and the bang-bang
property from Theorem \ref{tm:timeoptcon}. 

\section{Homogenization of norm-optimal controls}
\label{sec:operator-approximation}

In this section we consider only the periodic homogenization problem arising
from \eqref{heat_osc_aux}. Our purpose is to prove the convergence of the
norm-optimal controls in a fixed time. The convergence of time-optimal
controls will be considered in the next section.

Uniform null-controllability and convergence of $L^2$ norm minimal controls
for heat equations with rapidly oscillating
coefficients was established for controls in $L^2$
by L\'opez and Zuazua in
\cite{Lop_Zuaz}, by means of a three-step construction. That construction is
not needed here: the uniform spectral and observability inequalities obtained
in Section \ref{sec:spectral-observability} give the required uniform control
estimate directly. We combine this estimate with the norm-optimal control
results recalled in Section \ref{sec:abstract-time-optimal} and with standard
one-dimensional periodic homogenization.

Let $I\subset (0;1)$ be a non empty open interval, let $X=L^2(0,1)$, $U=L^2(I)$, and let
\begin{equation}\label{eq:control-operator-homogenization}
B\in\mathcal L(U,X),\qquad (Bv)(x)=\chi_I(x)v(x).
\end{equation}
For $\varepsilon>0$, set
\begin{equation*}\label{eq:oscillating-coefficient}
a_\varepsilon(x)=a\left(\frac{x}{\varepsilon}\right),
\end{equation*}
where $a$ is extended to $\mathbb R$ as a $1$-periodic function, and
define
\begin{equation*}\label{eq:homogenized-coefficient}
a_{0}
=\left(\int_0^1\frac{1}{a(y)}\,{\rm d}y\right)^{-1}.
\end{equation*}

For every
$\varepsilon\geqslant0$, let $A_\varepsilon$ be the positive self-adjoint
operator on $X$ associated with the closed form
\begin{equation*}\label{eq:forms-A-epsilon}
\mathfrak a_\varepsilon(v,w)
=\int_0^1 a_\varepsilon(x) \frac{{\rm d}v}{{\rm d}x} (x)\overline{\frac{{\rm d}w}{{\rm d}x} (x)}\,{\rm d}x
\qquad\qquad (v,w\in H_0^1(0,1)).
\end{equation*}
Thus, in the sense of distributions,
\begin{equation*}\label{eq:operators-A-epsilon}
A_\varepsilon v
=-\frac{{\rm d}}{{\rm d}x}
\left(a_\varepsilon\frac{{\rm d}v}{{\rm d}x}\right),
\qquad
A_0v=-a_{0}\frac{{\rm d}^2v}{{\rm d}x^2},
\end{equation*}
with homogeneous Dirichlet boundary conditions. We write
\begin{equation*}\label{eq:semigroups-epsilon}
\mathbb T_t^\varepsilon=\exp(-tA_\varepsilon)
\qquad\qquad(t\geqslant0,\ \varepsilon\geqslant0)
\end{equation*}
and
\begin{equation*}\label{eq:input-state-epsilon}
\Phi_\tau^\varepsilon u
=\int_0^\tau\mathbb T_{\tau-t}^\varepsilon Bu(t)\,{\rm d}t \qquad\qquad(u\in L^2([0,\infty);U)).
\end{equation*}
Thus, in the notation of Section \ref{sec:abstract-time-optimal}, the
generator of the $\varepsilon$-system is $-A_\varepsilon$; the symbol
$A_\varepsilon$ itself denotes the positive elliptic operator.

\subsection{Uniform verification of the abstract assumptions}

We first verify that the hypotheses recalled in Section
\ref{sec:abstract-time-optimal} hold uniformly for the family
$(A_\varepsilon)_{\varepsilon\geqslant0}$.

\begin{proposition}\label{prop:uniform-H1-H3}
The pairs $(A_\varepsilon,B)$ satisfy assumptions
{\rm(H1)--(H3)} of Section \ref{sec:abstract-time-optimal}, uniformly with respect to $\varepsilon$. More precisely:
\begin{enumerate}
\item The state and input spaces $X=L^2(0,1)$ and $U=L^2(I)$, as well as
the control operator $B$, are independent of $\varepsilon$.
\item For all $\varepsilon\geqslant 0$ the operator $-A_\varepsilon$ is positive, self-adjoint and with compact resolvent,
thus $(\mathbb T_t^\varepsilon)_{t\geqslant0}$ is an analytic contraction
semigroup.
\item For every $\tau>0$ and every measurable
$\mathcal D\subset(0,\tau)\times I$ with $|\mathcal D|>0$, there exists
$K_{\tau,\mathcal D}>0$, independent of $\varepsilon\geqslant0$, such that
\begin{equation}\label{eq:uniform-H3}
\|\mathbb T_\tau^\varepsilon f\|_{L^2(0,1)}
\leqslant K_{\tau,\mathcal D}
\int_{\mathcal D}
|\mathbb T_{\tau-t}^\varepsilon f(x)|\,{\rm d}x\,{\rm d}t
\qquad(f\in L^2(0,1)).
\end{equation}
\end{enumerate}
\end{proposition}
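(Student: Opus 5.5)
The plan is to check the three items one at a time. Almost everything follows directly from Section \ref{sec:spectral-observability}. The only real work is confirming that the whole family $(a_\varepsilon)_{\varepsilon\geqslant0}$ lies in one ellipticity class.

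Item 1 is immediate from the definitions: $X=L^2(0,1)$, $U=L^2(I)$ and $B$ in \eqref{eq:control-operator-homogenization} do not involve $\varepsilon$. Also, $I$ is an open interval of positive measure, so (H1) holds with $\mathcal O=I$ and $d=1$.

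For item 2, the first step is to record the uniform bounds
\[
K^{-1}\leqslant a_\varepsilon(x)\leqslant K\quad(\varepsilon>0),\qquad K^{-1}\leqslant a_0\leqslant K.
\]
The first follows from \eqref{low_coef_c} and the periodicity of $a$. The second holds because $a_0$ is the harmonic mean of $a$ over a period, and $1/a(y)$ takes values in $[K^{-1},K]$. Each form $\mathfrak a_\varepsilon$ is then symmetric and coercive on $H_0^1(0,1)$, with $\mathfrak a_\varepsilon(v,v)\geqslant K^{-1}\|v'\|_{L^2}^2$, so $A_\varepsilon$ is a strictly positive self-adjoint operator. Its form domain is $H_0^1(0,1)$, and this embeds compactly in $L^2(0,1)$, so $A_\varepsilon$ has compact resolvent. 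Since $-A_\varepsilon$ is self-adjoint and nonpositive, the spectral theorem shows that it generates an analytic contraction semigroup. This gives (H2). (The statement says ``$-A_\varepsilon$ positive''; the intended sign is that $A_\varepsilon$ is positive.)

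Item 3 is the main point. With $a_0:=K^{-1}$ and $a_1:=K$ in the notation of \eqref{eq:ellipticity-a}, every $a_\varepsilon$, including the constant $a_0$, satisfies \eqref{eq:ellipticity-a}. So each $A_\varepsilon$ is one of the operators $A_a$ of \eqref{eq:variable-coefficient-operator}. Now take $\mathcal D\subset(0,\tau)\times I\subset(0,\tau)\times(0,1)$ with $|\mathcal D|>0$. We need the time-reflected form: the right side of \eqref{eq:uniform-H3} is $\int_{\mathcal D}|\mathbb T^\varepsilon_{\tau-t}f|$, and the change of variables $t\mapsto\tau-t$ turns it into $\int_{\mathcal D^\sharp}|\mathbb T^\varepsilon_{t}f|$. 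Applying Corollary \ref{cor:variable-coefficient-observability} to the set $\mathcal D^\sharp$, which is measurable with $|\mathcal D^\sharp|=|\mathcal D|>0$ by \eqref{eq:measure-reflected-set}, gives
\[
\|\mathbb T^\varepsilon_\tau f\|\leqslant K(K^{-1},K,\tau,\mathcal D^\sharp)\int_{\mathcal D}|\mathbb T^\varepsilon_{\tau-t}f(x)|\,{\rm d}x\,{\rm d}t .
\]
The constant does not depend on the particular coefficient, hence not on $\varepsilon\geqslant0$, so we set $K_{\tau,\mathcal D}:=K(K^{-1},K,\tau,\mathcal D^\sharp)$.

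Finally, $A_\varepsilon$ is self-adjoint, so $\mathbb T^{\varepsilon*}_t=\mathbb T^\varepsilon_t$, and $B^*$ is restriction to $I$. Since $\mathcal D\subset(0,\tau)\times I$, the integrand in (H3) is exactly $|\mathbb T^\varepsilon_{\tau-t}f(x)|$ on $\mathcal D$. Thus \eqref{eq:uniform-H3} is (H3), holding uniformly in $\varepsilon$.

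The only delicate points are the harmonic-mean bound that puts $\varepsilon=0$ in the same ellipticity class, and keeping track of the time reflection when matching the corollary to (H3). Neither is a serious obstacle.
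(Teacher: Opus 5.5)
Your proposal is correct and follows the paper's proof essentially step by step. The uniform form bounds give self-adjointness, compact resolvent and an analytic contraction semigroup, and (H3) comes from applying Corollary \ref{cor:variable-coefficient-observability} to the time-reflected set $\mathcal D^\sharp$, with a constant that depends only on the common ellipticity bounds. Your explicit checks that the harmonic mean $a_0$ lies in $[K^{-1},K]$, and that $t\mapsto\tau-t$ turns $\int_{\mathcal D}|\mathbb T^\varepsilon_{\tau-t}f|$ into $\int_{\mathcal D^\sharp}|\mathbb T^\varepsilon_{t}f|$, fill in details the paper leaves implicit.
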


\begin{proof}
The first assertion is immediate from
\eqref{eq:control-operator-homogenization}. The bounds
\begin{equation}\label{eq:uniform-form-bounds}
K^{-1}\|v'\|_{L^2(0,1)}^2
\leqslant \mathfrak a_\varepsilon(v,v)
\leqslant K\|v'\|_{L^2(0,1)}^2
\qquad(v\in H_0^1(0,1))
\end{equation}
hold uniformly in $\varepsilon$ from \eqref{low_coef_c}. The representation theorem for closed
forms therefore shows that $A_\varepsilon$ is positive and self-adjoint.
Its resolvent is compact because
$H_0^1(0,1)\hookrightarrow L^2(0,1)$ is compact. Positivity and
self-adjointness imply analyticity and contractivity of the semigroup. In
fact, the spectral theorem also gives the uniform estimate
\begin{equation*}\label{eq:uniform-analyticity}
\|A_\varepsilon\mathbb T_t^\varepsilon\|_{\mathcal L(X)}
\leqslant \frac{1}{et}\qquad(t>0).
\end{equation*}

Finally,
\begin{equation*}\label{eq:adjoint-input-map-epsilon}
((\Phi_\tau^\varepsilon)^*f)(t,x)
=\chi_I(x)(\mathbb T_{\tau-t}^\varepsilon f)(x).
\end{equation*}
Apply Corollary \ref{cor:variable-coefficient-observability} to the
time-reflected set $\mathcal D^\sharp$. Since all coefficients
$a_\varepsilon$ satisfy the same lower and upper bounds, the resulting
constant is independent of $\varepsilon$. This proves
\eqref{eq:uniform-H3}. 
\end{proof}

\subsection{Homogenization of the semigroups and input maps}

The convergence result needed below follows directly from the operator
error estimates of Meshkova and Suslina. Their result applies to the
present problem by taking $d=m=n=1$, the domain $(0,1)$, $b(D)=D$, and
$g=a$. In this case, the effective coefficient appearing in their work is
the harmonic mean $a_0$ defined above.

\begin{proposition}\label{prop:semigroup-homogenization}
There exist constants $C,c>0$ and $\varepsilon_0>0$, depending only on
the lower and upper bounds for $a$, such that
\begin{equation}\label{eq:operator-norm-semigroup-homogenization}
 \|\mathbb T_t^\varepsilon-\mathbb T_t^0\|_{\mathcal L(L^2(0,1))}
 \leqslant
 C\varepsilon(t+\varepsilon^2)^{-1/2}e^{-ct}
 \qquad(t\geqslant0,\ 0<\varepsilon\leqslant\varepsilon_0).
\end{equation}
Consequently, for every $\tau>0$ and every $f\in L^2(0,1)$,
\begin{equation}\label{eq:strong-semigroup-homogenization}
 \mathbb T_\cdot^\varepsilon f\longrightarrow
 \mathbb T_\cdot^0f
 \quad\text{in }C([0,\tau];L^2(0,1)).
\end{equation}
\end{proposition}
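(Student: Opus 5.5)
The plan has two parts: first obtain the operator-norm estimate \eqref{eq:operator-norm-semigroup-homogenization} from the Meshkova--Suslina Dirichlet error bounds, then derive the strong convergence \eqref{eq:strong-semigroup-homogenization} as a short consequence.

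\textbf{The operator estimate.} In the setting of Meshkova and Suslina, take $d=m=n=1$, the domain $(0,1)$, $b(D)=D$ and $g=a$. With these choices the operator $b(D)^*g^\varepsilon b(D)$ with Dirichlet conditions is exactly $A_\varepsilon$. The effective matrix $g^0$ in dimension one is the harmonic mean of $g$, so it equals $a_0$ and the effective operator is $A_0$.

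Their main result for the Dirichlet parabolic problem reads
\[
\|e^{-tA_\varepsilon}-e^{-tA_0}\|_{\mathcal L(L^2)}\leqslant C\varepsilon(t+\varepsilon^2)^{-1/2}e^{-ct}
\qquad (t\geqslant 0,\ 0<\varepsilon\leqslant\varepsilon_0),
\]
with $C$, $c$ and $\varepsilon_0$ depending only on $\|g\|_{L^\infty}$, $\|g^{-1}\|_{L^\infty}$ and the domain. Since the domain is fixed, these constants depend only on $K$. I will check that their hypotheses hold here: the domain must be a bounded $C^{1,1}$ domain, which is trivial in one dimension, and $g$ must be periodic, bounded and uniformly positive, which is \eqref{low_coef_c}.

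On the rate $c$: it comes from the uniform spectral gap $\lambda_1^\varepsilon\geqslant \pi^2/K$, which follows from \eqref{eq:uniform-form-bounds} and the Poincaré inequality. If their statement only gives the bound for $t>0$, the case $t=0$ is trivial, because both semigroups equal the identity there.

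\textbf{Strong convergence.} The operator bound alone does not give uniform convergence near $t=0$, since the right-hand side is of order $1$ at $t=0$. I will therefore use an $\eta$-splitting together with contractivity.

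Fix $f\in L^2(0,1)$ and $\eta>0$. Choose $g\in H_0^1(0,1)$ with $\|f-g\|_{L^2}<\eta$. Both semigroups are contractions, so
\[
\sup_{t\in[0,\tau]}\|(\mathbb T^\varepsilon_t-\mathbb T^0_t)f\|\leqslant 2\eta+\sup_{t\in[0,\tau]}\|(\mathbb T^\varepsilon_t-\mathbb T^0_t)g\|.
\]
It remains to control the last term, and I split the time interval at $\delta=\eta^2$.

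For $t\geqslant\delta$, the operator estimate gives a bound $C\varepsilon\delta^{-1/2}\|g\|$, which tends to $0$ as $\varepsilon\to0$.

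For $t\leqslant\delta$, I use the uniform form bounds. For $g\in H_0^1=\mathcal D(A_\varepsilon^{1/2})$ we have
\[
\|\mathbb T_t^\varepsilon g-g\|\leqslant t^{1/2}\|A_\varepsilon^{1/2}g\|\leqslant (Kt)^{1/2}\|g'\|_{L^2},
\]
by the spectral theorem and \eqref{eq:uniform-form-bounds}. The same bound holds for $\varepsilon=0$, so this term is at most $2(K\delta)^{1/2}\|g'\|$.

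Letting $\varepsilon\to0$ and then $\eta\to0$ (with $g$ depending on $\eta$) would not close the argument directly, because $\|g'\|$ may blow up as $\eta\to0$. The remedy is to choose the parameters in order: first pick $g$ for the given $\eta$, then pick $\delta$ small depending on $\|g'\|$, and finally take $\varepsilon$ small. This yields the claim.

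The main obstacle is the careful matching of the Meshkova--Suslina setting and constants, in particular the exponential factor $e^{-ct}$. If their statement lacks that factor, \eqref{eq:strong-semigroup-homogenization} only needs bounded $t$, so a weaker version suffices.
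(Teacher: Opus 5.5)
Your proposal is correct and follows the paper's proof essentially step for step. The operator bound comes from the Meshkova--Suslina Dirichlet theorem with $d=1$, $b(D)=D$, $g=a$. The strong convergence comes from $\|\mathbb T_t^\varepsilon g-g\|_{L^2(0,1)}\leqslant (Kt)^{1/2}\|g'\|_{L^2(0,1)}$ for $g\in H_0^1(0,1)$ near $t=0$, the operator bound away from $t=0$, and density plus contractivity. The one thing to tidy up is the provisional choice $\delta=\eta^2$: drop it in favour of the order you state at the end, namely $g$ first, then $\delta$ depending on $\|g'\|_{L^2(0,1)}$, then $\varepsilon$.
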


\begin{proof}
Estimate \eqref{eq:operator-norm-semigroup-homogenization} is the scalar
one-dimensional Dirichlet case of
\cite[Theorem~4.2]{MeshkovaSuslina2016}. Notice that this estimate gives
operator-norm convergence uniformly on every compact subinterval of
$(0,\infty)$, but not uniformly down to $t=0$.

To prove \eqref{eq:strong-semigroup-homogenization}, first take
$f\in H_0^1(0,1)$. The spectral theorem and the inequality
$(1-e^{-s})^2\leqslant s$ for $s\geqslant0$ give
\begin{equation*}
 \|\mathbb T_t^\varepsilon f-f\|_{L^2(0,1)}^2
 \leqslant
 t\|A_\varepsilon^{1/2}f\|_{L^2(0,1)}^2
 =t\mathfrak a_\varepsilon(f,f)
 \leqslant Kt\|f'\|_{L^2(0,1)}^2.
\end{equation*}
The same estimate holds for $\mathbb T_t^0$. Hence the convergence is
uniform near $t=0$ for $f\in H_0^1(0,1)$, while
\eqref{eq:operator-norm-semigroup-homogenization} gives uniform
convergence away from zero. The conclusion for arbitrary
$f\in L^2(0,1)$ follows by density of $H_0^1(0,1)$ and contractivity of
the semigroups.
\end{proof}

\begin{lemma}\label{lem:input-map-homogenization}
Let $\tau>0$.
\begin{enumerate}
\item For every $u\in L^2((0,\tau)\times I)$,
\begin{equation}\label{eq:strong-input-map-homogenization}
 \Phi_\tau^\varepsilon u\longrightarrow\Phi_\tau^0u
 \quad\text{strongly in }L^2(0,1).
\end{equation}
\item If
$u_\varepsilon\xrightharpoonup{*}u$ in
$L^\infty((0,\tau)\times I)$, then
\begin{equation}\label{eq:weak-input-map-homogenization}
 \Phi_\tau^\varepsilon u_\varepsilon
 \rightharpoonup\Phi_\tau^0u
 \quad\text{weakly in }L^2(0,1).
\end{equation}
\end{enumerate}
\end{lemma}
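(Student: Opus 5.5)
For the first assertion I will write
\[
\Phi_\tau^\varepsilon u-\Phi_\tau^0u=\int_0^\tau\bigl(\mathbb T_{\tau-t}^\varepsilon-\mathbb T_{\tau-t}^0\bigr)Bu(t)\,{\rm d}t
\]
and apply dominated convergence to the Bochner integral.

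\begin{itemize}
\item \emph{Pointwise convergence.} For almost every $t\in(0,\tau)$, $Bu(t)\in L^2(0,1)$ is fixed. By \eqref{eq:strong-semigroup-homogenization} (or directly by \eqref{eq:operator-norm-semigroup-homogenization} for $t<\tau$), the integrand tends to $0$ in $L^2(0,1)$.
\item \emph{Domination.} Contractivity of both semigroups bounds the integrand's norm by $2\|u(t)\|_{L^2(I)}$, which is integrable on $(0,\tau)$ since $u\in L^2(0,\tau;L^2(I))$.
\end{itemize}

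Alternatively, \eqref{eq:operator-norm-semigroup-homogenization} together with Cauchy--Schwarz gives the quantitative bound
\[
\|\Phi_\tau^\varepsilon u-\Phi_\tau^0u\|\leqslant C\varepsilon\int_0^\tau(s+\varepsilon^2)^{-1/2}\|u(\tau-s)\|\,{\rm d}s .
\]
This tends to $0$, because $(s+\varepsilon^2)^{-1/2}\leqslant s^{-1/2}$ and $s\mapsto s^{-1/2}\|u(\tau-s)\|$ lies in $L^1$ only when $u\in L^p$ with $p>2$. For general $u\in L^2$ the dominated convergence argument is cleaner, and I will use it.

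For the second assertion I test against $g\in L^2(0,1)$ and split
\[
\langle\Phi_\tau^\varepsilon u_\varepsilon,g\rangle
=\langle u_\varepsilon,(\Phi_\tau^0)^*g\rangle
+\langle u_\varepsilon,(\Phi_\tau^\varepsilon)^*g-(\Phi_\tau^0)^*g\rangle .
\]
\begin{itemize}
\item \emph{First term.} Here $(\Phi_\tau^0)^*g=\chi_I\,\mathbb T_{\tau-\cdot}^0g$ belongs to $L^2((0,\tau)\times I)\subset L^1((0,\tau)\times I)$, since the domain has finite measure. Weak-$*$ convergence in $L^\infty$ is exactly convergence against $L^1$ functions, so this term tends to $\langle u,(\Phi_\tau^0)^*g\rangle=\langle\Phi_\tau^0u,g\rangle$.
\item \emph{Second term.} Weak-$*$ convergent sequences are bounded (uniform boundedness principle), so $\sup_\varepsilon\|u_\varepsilon\|_{L^\infty}<\infty$. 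Hence the $L^2((0,\tau)\times I)$ norms of $u_\varepsilon$ are uniformly bounded as well. By \eqref{eq:strong-semigroup-homogenization}, $\mathbb T_{\tau-\cdot}^\varepsilon g\to\mathbb T_{\tau-\cdot}^0g$ uniformly in $C([0,\tau];L^2(0,1))$, so $\|(\Phi_\tau^\varepsilon)^*g-(\Phi_\tau^0)^*g\|_{L^2((0,\tau)\times I)}\leqslant\sqrt\tau\sup_t\|\mathbb T_t^\varepsilon g-\mathbb T_t^0g\|\to0$. Cauchy--Schwarz then sends the second term to $0$.
\end{itemize}
This yields \eqref{eq:weak-input-map-homogenization}.

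There is no serious obstacle. The only delicate point is the lack of uniform operator-norm convergence near $t=0$, and both parts avoid it by working with strong convergence \eqref{eq:strong-semigroup-homogenization} on fixed vectors rather than operator-norm estimates. I also need to verify that the adjoint formula for $(\Phi_\tau^\varepsilon)^*$ holds with the self-adjoint semigroup, which follows from self-adjointness of $A_\varepsilon$.
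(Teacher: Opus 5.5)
Your proof is correct, but it is organized differently from the paper's. The paper proves the first assertion quantitatively: inserting \eqref{eq:operator-norm-semigroup-homogenization} and applying Cauchy--Schwarz in time to the whole kernel gives
\[
\|\Phi_\tau^\varepsilon u-\Phi_\tau^0u\|_{L^2(0,1)}\leqslant C\Bigl(\int_0^\tau\frac{\varepsilon^2}{s+\varepsilon^2}\,{\rm d}s\Bigr)^{1/2}\|u\|_{L^2((0,\tau)\times I)}=C\varepsilon\bigl(\log(1+\tau/\varepsilon^2)\bigr)^{1/2}\|u\|_{L^2((0,\tau)\times I)}.
\]
This is convergence of $\Phi_\tau^\varepsilon$ to $\Phi_\tau^0$ in operator norm, with a rate. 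The paper then obtains the second assertion on the primal side, from the splitting $\Phi_\tau^\varepsilon u_\varepsilon-\Phi_\tau^0u=(\Phi_\tau^\varepsilon-\Phi_\tau^0)u_\varepsilon+\Phi_\tau^0(u_\varepsilon-u)$. The first term tends to zero strongly and the second weakly.

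Your dominated-convergence argument, and your splitting on the adjoint side, use only the convergence $\mathbb T_t^\varepsilon g\to\mathbb T_t^0g$ for fixed vectors. They would therefore work in a setting where homogenization gives strong convergence of the semigroups but no rate. The paper's route gives more: an explicit rate and uniform convergence of the input maps, which makes the second part a one-line consequence of the first.

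Your stated reason for abandoning the quantitative route is, however, mistaken. The displayed bound comes from the triangle inequality for Bochner integrals, not from Cauchy--Schwarz. Replacing $(s+\varepsilon^2)^{-1/2}$ by $s^{-1/2}$ throws away exactly the regularization that matters. Since $\varepsilon(s+\varepsilon^2)^{-1/2}\leqslant1$ and it tends to $0$ for every $s>0$, the right-hand side of your bound tends to zero for every $u\in L^1(0,\tau;L^2(I))$. Cauchy--Schwarz with the full kernel then gives the rate above for every $u\in L^2$. For the same reason, the lack of uniform operator-norm convergence near $t=0$ is not a real obstacle for the input maps: the singular part of the kernel is square integrable, so its contribution vanishes after integration in time.
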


\begin{proof}
By \eqref{eq:operator-norm-semigroup-homogenization} and the
Cauchy--Schwarz inequality,
\begin{align*}
 \|\Phi_\tau^\varepsilon u-\Phi_\tau^0u\|_{L^2(0,1)}
 &\leqslant
 C\left(\int_0^\tau
 \frac{\varepsilon^2}{s+\varepsilon^2}\,{\rm d}s\right)^{1/2}
 \|Bu\|_{L^2(0,\tau;L^2(0,1))}\\
 &=C\varepsilon
 \left(\log\left(1+\frac{\tau}{\varepsilon^2}\right)\right)^{1/2}
 \|Bu\|_{L^2(0,\tau;L^2(0,1))}.
\end{align*}
This proves the first assertion and, in fact, the operator-norm
convergence of $\Phi_\tau^\varepsilon$ to $\Phi_\tau^0$ as maps from
$L^2(0,\tau;L^2(I))$ to $L^2(0,1)$.

For the second assertion, write
\begin{equation*}
 \Phi_\tau^\varepsilon u_\varepsilon-\Phi_\tau^0u
 = (\Phi_\tau^\varepsilon-\Phi_\tau^0)u_\varepsilon
   +\Phi_\tau^0(u_\varepsilon-u).
\end{equation*}
The first term converges strongly to zero because
$(u_\varepsilon)$ is bounded in $L^\infty$, hence in $L^2$, and because
of the operator-norm estimate above. Weak-* convergence in $L^\infty$ on
the finite-measure set $(0,\tau)\times I$ implies weak convergence in
$L^2$. Therefore, the second term converges weakly to zero in $L^2(0,1)$.
\end{proof}

\subsection{Proof of Theorem~\ref{thm:introduction-norm-optimal}}

Fix $\tau>0$ and $z^0\in L^2(0,1)\setminus\{0\}$. Define
\begin{equation*}\label{eq:norm-optimal-value-epsilon}
N_\varepsilon(\tau)
=\inf\left\{
\|u\|_{L^\infty((0,\tau)\times I)}:
\mathbb T_\tau^\varepsilon z^0+\Phi_\tau^\varepsilon u=0
\right\}.
\end{equation*}
The function $N_\varepsilon$ also depends on $z^0$, but we suppress this dependence to simplify the notation. Proposition \ref{prop:uniform-H1-H3} and the results recalled in Section
\ref{sec:abstract-time-optimal} show that the infimum is attained by a
unique control $u_\varepsilon^\tau$. Moreover,
\begin{equation}\label{eq:bang-bang-norm-optimal-epsilon}
|u_\varepsilon^\tau(t,x)|=N_\varepsilon(\tau)
\quad\text{for almost every }(t,x)\in(0,\tau)\times I.
\end{equation}

\begin{proposition}\label{prop:uniform-bound-N-epsilon}
For every $\tau>0$ there exists $C_\tau>0$, independent of
$\varepsilon\geqslant 0$, such that
\begin{equation*}\label{eq:uniform-bound-N-epsilon}
N_\varepsilon(\tau)\leqslant C_\tau\|z^0\|_{L^2(0,1)}.
\end{equation*}
\end{proposition}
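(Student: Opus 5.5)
The bound will follow from the uniform null-controllability estimate of Corollary \ref{cor:variable-coefficient-null-control}, applied with the cylindrical control set $\mathcal D=(0,\tau)\times I$ and with the coefficient $a_\varepsilon$ (respectively the constant $a_0$ when $\varepsilon=0$).

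The first step is to check that every coefficient in the family satisfies the ellipticity hypothesis \eqref{eq:ellipticity-a} with the same constants. From \eqref{low_coef_c}, each $a_\varepsilon(x)=a(x/\varepsilon)$ satisfies $K^{-1}\leqslant a_\varepsilon\leqslant K$ almost everywhere on $(0,1)$. The homogenized coefficient $a_0$ is a harmonic mean of values lying in $[K^{-1},K]$, so it also lies in $[K^{-1},K]$. We may therefore take the constants in \eqref{eq:ellipticity-a} equal to $K^{-1}$ and $K$ for every $\varepsilon\geqslant0$. One small clash of notation must be avoided here: the lower bound in \eqref{eq:ellipticity-a} should be written as $K^{-1}$, not as the symbol $a_0$, which now denotes the homogenized coefficient.

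Next, apply estimate \eqref{eq:cylindrical-Linfty-control-cost} with $\mathcal O=I$ and $m=|I|>0$. For every $\varepsilon\geqslant0$ it provides a control $u\in L^\infty((0,\tau)\times I)$ such that $\mathbb T^\varepsilon_\tau z^0+\Phi^\varepsilon_\tau u=0$ and
\[
\|u\|_{L^\infty((0,\tau)\times I)}\leqslant C\exp\left(\frac{C}{\tau}\right)\|z^0\|_{L^2(0,1)},\qquad C=C(K^{-1},K,|I|).
\]
This control is admissible in the infimum defining $N_\varepsilon(\tau)$. Hence $N_\varepsilon(\tau)$ is bounded by the right-hand side, and the claim holds with $C_\tau=C\exp(C/\tau)$.

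We must also make sure that the mild solution in Corollary \ref{cor:variable-coefficient-null-control} coincides with the state $\mathbb T^\varepsilon_\tau z^0+\Phi^\varepsilon_\tau u$. This holds because $\chi_{\mathcal D}u$ with $\mathcal D=(0,\tau)\times I$ is exactly $Bu$ for the operator $B$ in \eqref{eq:control-operator-homogenization}.

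The only point that really needs care is the uniformity in $\varepsilon$ of the constant. This rests on the fact that the constant in Proposition \ref{prop:L1-spectral-variable-coefficient}, and hence in Corollaries \ref{cor:variable-coefficient-observability} and \ref{cor:variable-coefficient-null-control}, depends on the coefficient only through its ellipticity bounds. That was already recorded in Proposition \ref{prop:uniform-H1-H3}. If one prefers to avoid the explicit cylindrical cost, an alternative is the duality argument: take the observability estimate \eqref{eq:uniform-H3} with $\mathcal D=(0,\tau)\times I$, whose constant is independent of $\varepsilon$. The Hahn--Banach argument in the proof of Corollary \ref{cor:variable-coefficient-null-control} then turns it into an $L^\infty$ control of norm at most $K_{\tau,\mathcal D}\|z^0\|_{L^2(0,1)}$, which gives the same conclusion.
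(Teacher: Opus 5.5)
Your proposal is correct and matches the paper's proof: apply the cylindrical estimate \eqref{eq:cylindrical-Linfty-control-cost} from Corollary~\ref{cor:variable-coefficient-null-control} with $\mathcal O=I$, use that all coefficients share the ellipticity bounds $K^{-1},K$ (including the harmonic mean, which you rightly check), and take the infimum to get $C_\tau=C\exp(C/\tau)$. Your extra remarks are consistent with the paper but not needed: the identification $\chi_{\mathcal D}u=Bu$, the clash of notation with $a_0$, and the alternative duality route via \eqref{eq:uniform-H3}.
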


\begin{proof}
Apply Corollary \ref{cor:variable-coefficient-null-control} to the
coefficient $a_\varepsilon$, with final time $\tau$, control region $I$,
and initial state $z^0$. Since
$K^{-1}\leqslant a_\varepsilon\leqslant K$ uniformly in $\varepsilon$,
the cylindrical estimate \eqref{eq:cylindrical-Linfty-control-cost}
provides a control $u$ satisfying
\[
 \mathbb T_\tau^\varepsilon z^0+\Phi_\tau^\varepsilon u=0,
 \qquad
 \|u\|_{L^\infty((0,\tau)\times I)}
 \leqslant C\exp(C/\tau)\|z^0\|_{L^2(0,1)},
\]
where $C$ depends only on $K$ and $|I|$. Taking the infimum over all
admissible controls gives the asserted estimate, with
$C_\tau=C\exp(C/\tau)$.
\end{proof}

We shall use the following consequence of the same uniform
null-controllability estimate.

\begin{lemma}[Uniform correction property]\label{lem:uniform-correction}
For every $\delta>0$ there exists $C_\delta>0$, independent of
$\varepsilon\geqslant0$, with the following property. For every
$r\in L^2(0,1)$ there exists
$w_{\varepsilon,r}\in L^\infty((0,\delta)\times I)$ such that
\begin{equation}\label{eq:uniform-correction-terminal-equation}
 \mathbb T_\delta^\varepsilon r
 +\Phi_\delta^\varepsilon w_{\varepsilon,r}=0
\end{equation}
and
\begin{equation}\label{eq:uniform-correction-estimate}
 \|w_{\varepsilon,r}\|_{L^\infty((0,\delta)\times I)}
 \leqslant C_\delta\|r\|_{L^2(0,1)}.
\end{equation}
\end{lemma}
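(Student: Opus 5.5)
The lemma is essentially a restatement of the cylindrical $L^\infty$ null-control cost estimate \eqref{eq:cylindrical-Linfty-control-cost} of Corollary \ref{cor:variable-coefficient-null-control}, applied on the short horizon $\delta$ instead of $\tau$. So I would not build anything new. Instead, I would check that the constant produced there is genuinely independent of $\varepsilon\geqslant0$, including the homogenized case $\varepsilon=0$.

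First, fix $\delta>0$ and $r\in L^2(0,1)$. If $r=0$, take $w_{\varepsilon,r}=0$. Otherwise, for $\varepsilon>0$, apply Corollary \ref{cor:variable-coefficient-null-control} with coefficient $a_\varepsilon$, final time $\delta$, set $\mathcal D=(0,\delta)\times I$ (so $\mathcal O=I$, $m=|I|$), and initial state $r$. This yields $w_{\varepsilon,r}\in L^\infty((0,\delta)\times I)$ such that the mild solution vanishes at time $\delta$. In the notation of Section \ref{sec:operator-approximation}, this is exactly \eqref{eq:uniform-correction-terminal-equation}, since the mild solution at time $\delta$ is $\mathbb T_\delta^\varepsilon r+\Phi_\delta^\varepsilon w_{\varepsilon,r}$ with $B$ given by \eqref{eq:control-operator-homogenization}. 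The bound \eqref{eq:cylindrical-Linfty-control-cost} then gives \eqref{eq:uniform-correction-estimate} with
\[
C_\delta=C\exp(C/\delta),\qquad C=C(K^{-1},K,|I|).
\]
Here I use that $K^{-1}\leqslant a_\varepsilon\leqslant K$ a.e. for all $\varepsilon>0$, by \eqref{low_coef_c}, so the ellipticity bounds $a_0,a_1$ of Section \ref{sec:spectral-observability} can be taken equal to $K^{-1},K$.

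Second, the case $\varepsilon=0$. The homogenized coefficient is the constant $a_0=(\int_0^1 a^{-1})^{-1}$. Since $K^{-1}\leqslant 1/a\leqslant K$, we get $K^{-1}\leqslant a_0\leqslant K$, so the same corollary applies with the same constant. Hence $C_\delta$ is independent of $\varepsilon\geqslant0$.

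The only point needing care, and thus the main obstacle, is that the constant in Corollary \ref{cor:variable-coefficient-null-control} truly depends only on $(a_0,a_1,\tau,\mathcal D)$ and not on the coefficient itself. This traces back to the uniformity in Proposition \ref{prop:L1-spectral-variable-coefficient} and to Theorem \ref{theo:poate_abs}, which are taken as given. Note also that $\mathcal D^\sharp=\mathcal D$ for the cylinder, so no time-reflection issue arises. I would simply record the explicit dependence $C_\delta=C\exp(C/\delta)$, which blows up as $\delta\to0$ but is finite for each fixed $\delta$; that is all that is claimed.
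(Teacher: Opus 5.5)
Your proposal is correct and is the paper's own argument: apply Corollary~\ref{cor:variable-coefficient-null-control} on $(0,\delta)$ with coefficient $a_\varepsilon$ and the cylindrical set $(0,\delta)\times I$, and use the cost bound \eqref{eq:cylindrical-Linfty-control-cost} with $C_\delta=C\exp(C/\delta)$, where $C=C(K,|I|)$. Your explicit check that the harmonic mean satisfies $K^{-1}\leqslant a_0\leqslant K$, so that $\varepsilon=0$ is covered by the same constant, is a detail the paper leaves implicit.
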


\begin{proof}
Apply Corollary \ref{cor:variable-coefficient-null-control} on the time
interval $(0,\delta)$, with the coefficient $a_\varepsilon$ and the
cylindrical control set $(0,\delta)\times I$. Since
$K^{-1}\leqslant a_\varepsilon\leqslant K$ for every $\varepsilon$, the
constant in \eqref{eq:cylindrical-Linfty-control-cost} is independent of
$\varepsilon$. The resulting control satisfies
\eqref{eq:uniform-correction-terminal-equation} and
\eqref{eq:uniform-correction-estimate}; one may take
$C_\delta=C\exp(C/\delta)$, where $C$ depends only on $K$ and $|I|$.
\end{proof}

\begin{proposition}\label{prop:convergence-norm-optimal-values}
For every $\tau>0$,
\begin{equation}\label{eq:convergence-N-epsilon}
\lim_{\varepsilon\to0}N_\varepsilon(\tau)=N_0(\tau).
\end{equation}
\end{proposition}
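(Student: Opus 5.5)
We will prove the two inequalities $\liminf_{\varepsilon\to0}N_\varepsilon(\tau)\geqslant N_0(\tau)$ and $\limsup_{\varepsilon\to0}N_\varepsilon(\tau)\leqslant N_0(\tau)$ separately. The lower bound comes from weak-* compactness and Lemma \ref{lem:input-map-homogenization}. The upper bound comes from a short-time correction via Lemma \ref{lem:uniform-correction} together with the continuity of $N_0$ from Theorem \ref{tm:normoptcon}.

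\textbf{Lower bound.} Let $\varepsilon_n\to0$ be a sequence with $N_{\varepsilon_n}(\tau)\to\liminf_{\varepsilon\to0}N_\varepsilon(\tau)$. By Proposition \ref{prop:uniform-bound-N-epsilon}, the controls $u^\tau_{\varepsilon_n}$ are bounded in $L^\infty((0,\tau)\times I)$. After extracting a subsequence, $u^\tau_{\varepsilon_n}\xrightharpoonup{*}v$. Weak-* lower semicontinuity of the norm gives
\[
\|v\|_{L^\infty}\leqslant\liminf_{n\to\infty} N_{\varepsilon_n}(\tau).
\]
We pass to the limit in $0=\mathbb T^{\varepsilon_n}_\tau z^0+\Phi^{\varepsilon_n}_\tau u^\tau_{\varepsilon_n}$ as follows.
\begin{itemize}
\item The first term converges strongly by \eqref{eq:strong-semigroup-homogenization}.
\item The second term converges weakly to $\Phi^0_\tau v$ by \eqref{eq:weak-input-map-homogenization}.
\end{itemize}
Hence $\mathbb T^0_\tau z^0+\Phi^0_\tau v=0$, so $v$ is admissible for the homogenized problem and $N_0(\tau)\leqslant\|v\|_{L^\infty}$. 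This proves the lower bound.

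\textbf{Upper bound.} Fix $\delta\in(0,\tau)$ and let $u^{\tau-\delta}_0$ be the norm-optimal homogenized control on $(0,\tau-\delta)$. Use it as a control for the $\varepsilon$-system on $(0,\tau-\delta)$, and set
\[
r_\varepsilon=\mathbb T^\varepsilon_{\tau-\delta}z^0+\Phi^\varepsilon_{\tau-\delta}u_0^{\tau-\delta}.
\]
By Proposition \ref{prop:semigroup-homogenization} and the first assertion of Lemma \ref{lem:input-map-homogenization} applied at time $\tau-\delta$, $r_\varepsilon$ converges strongly to $\mathbb T^0_{\tau-\delta}z^0+\Phi^0_{\tau-\delta}u_0^{\tau-\delta}=0$. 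Thus $\|r_\varepsilon\|_{L^2}\to0$.

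On $(\tau-\delta,\tau)$ we apply the correction $w_{\varepsilon,r_\varepsilon}$ from Lemma \ref{lem:uniform-correction}, translated in time. By the semigroup property, the concatenated control steers $z^0$ to zero at time $\tau$ for the $\varepsilon$-system. It is therefore admissible, and
\[
N_\varepsilon(\tau)\leqslant\max\bigl\{N_0(\tau-\delta),\,C_\delta\|r_\varepsilon\|_{L^2(0,1)}\bigr\}.
\]
Letting $\varepsilon\to0$ gives $\limsup_{\varepsilon\to0}N_\varepsilon(\tau)\leqslant N_0(\tau-\delta)$. Letting $\delta\to0$ and using the continuity of $N_0$ from Theorem \ref{tm:normoptcon} (which applies by Proposition \ref{prop:uniform-H1-H3}) gives $\limsup_{\varepsilon\to0}N_\varepsilon(\tau)\leqslant N_0(\tau)$.

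The main point needing care is the upper bound. One cannot perturb the homogenized control directly, because the target $\{0\}$ has empty interior. The sacrificed interval $\delta$ and the uniformity of $C_\delta$ in $\varepsilon$ are exactly what make the correction work. The concatenation identity is also needed:
\[
\mathbb T^\varepsilon_\tau z^0+\Phi^\varepsilon_\tau u=\mathbb T^\varepsilon_\delta r_\varepsilon+\Phi^\varepsilon_\delta w_{\varepsilon,r_\varepsilon}
\]
for the concatenated $u$. It is routine, but it should be checked explicitly.
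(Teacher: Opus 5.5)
Your proposal is correct and follows the paper's proof essentially step for step. The lower bound uses weak-* compactness of the uniformly bounded optimal controls, passage to the limit in the terminal identity via Proposition \ref{prop:semigroup-homogenization} and Lemma \ref{lem:input-map-homogenization}, and weak-* lower semicontinuity of the norm; the upper bound concatenates $u_0^{\tau-\delta}$ with the uniform short-time correction of Lemma \ref{lem:uniform-correction}, applied to the vanishing residual $r_\varepsilon$, and then lets $\delta\to0^+$ using the continuity of $N_0$.
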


\begin{proof}
\textit{Step 1. Proof of the lower bound.}
 Let $(\varepsilon_n)$ be a sequence tending
to zero such that
\begin{equation*}\label{eq:choice-liminf-N-epsilon}
N_{\varepsilon_n}(\tau)
\longrightarrow\liminf_{\varepsilon\to0}N_\varepsilon(\tau).
\end{equation*}
By Proposition \ref{prop:uniform-bound-N-epsilon}, the controls
$u_{\varepsilon_n}^\tau$ are bounded in
$L^\infty((0,\tau)\times I)$. The Banach--Alaoglu theorem gives, after
extraction,
\begin{equation*}\label{eq:weak-star-optimal-controls}
u_{\varepsilon_n}^\tau\xrightharpoonup{*}u
\quad\text{in }L^\infty((0,\tau)\times I).
\end{equation*}
Since
\begin{equation*}\label{eq:terminal-equation-epsilon}
\mathbb T_\tau^{\varepsilon_n}z^0
+\Phi_\tau^{\varepsilon_n}u_{\varepsilon_n}^\tau=0,
\end{equation*}
Proposition \ref{prop:semigroup-homogenization} and Lemma
\ref{lem:input-map-homogenization} imply
\begin{equation*}\label{eq:terminal-equation-limit}
\mathbb T_\tau^0z^0+\Phi_\tau^0u=0.
\end{equation*}
Thus $u$ is admissible for the homogenized norm-optimal problem. The norm
of a dual Banach space is weak-* lower semicontinuous; see, for instance,
\cite[Chapter~III]{brezis}.  Therefore,
\begin{equation}\label{eq:liminf-N-epsilon}
N_0(\tau)\leqslant \|u\|_{L^\infty((0,\tau)\times I)}
\leqslant \liminf_{\varepsilon\to0}N_\varepsilon(\tau).
\end{equation}

\textit{Step 2. Proof of the upper bound.} Fix $\delta\in(0,\tau)$ and let
$u_0^{\tau-\delta}$ be the norm-optimal control for the homogenized system in
time $\tau-\delta$. Apply this same control to the $\varepsilon$-system on
$(0,\tau-\delta)$ and denote the resulting terminal state by $r_\varepsilon$, so that
\begin{equation*}\label{eq:residual-epsilon}
r_\varepsilon
=\mathbb T_{\tau-\delta}^\varepsilon z^0
+\Phi_{\tau-\delta}^\varepsilon u_0^{\tau-\delta}.
\end{equation*}
Since $u_0^{\tau-\delta}$ controls the homogenized system to zero in time
$\tau-\delta$, we have
\begin{equation*}
 \mathbb T_{\tau-\delta}^0z^0
 +\Phi_{\tau-\delta}^0u_0^{\tau-\delta}=0.
\end{equation*}
Subtracting this equality from the definition of $r_\varepsilon$ gives
\begin{equation}\label{eq:decomposition-residual-epsilon}
 r_\varepsilon
 =\left(\mathbb T_{\tau-\delta}^\varepsilon
        -\mathbb T_{\tau-\delta}^0\right)z^0
 +\left(\Phi_{\tau-\delta}^\varepsilon
        -\Phi_{\tau-\delta}^0\right)u_0^{\tau-\delta}.
\end{equation}
Here $\delta\in(0,\tau)$ is fixed. The first term on the right-hand side
of \eqref{eq:decomposition-residual-epsilon} converges strongly to zero
in $L^2(0,1)$ by Proposition
\ref{prop:semigroup-homogenization}. Moreover,
$u_0^{\tau-\delta}\in L^\infty((0,\tau-\delta)\times I)$, and hence it
belongs to $L^2((0,\tau-\delta)\times I)$ and is independent of
$\varepsilon$. The second term therefore converges strongly to zero by
Lemma \ref{lem:input-map-homogenization}. Consequently,
\begin{equation*}\label{eq:residual-converges-zero}
r_\varepsilon\longrightarrow0
\quad\text{in }L^2(0,1).
\end{equation*}
Apply Lemma \ref{lem:uniform-correction} to $r_\varepsilon$ and translate
the resulting control to the interval $(\tau-\delta,\tau)$. We obtain
$w_\varepsilon\in L^\infty((\tau-\delta,\tau)\times I)$ satisfying
\begin{equation}\label{eq:translated-correction-terminal-equation}
 \mathbb T_\delta^\varepsilon r_\varepsilon
 +\int_{\tau-\delta}^\tau
 \mathbb T_{\tau-t}^\varepsilon Bw_\varepsilon(t)\,{\rm d}t=0
\end{equation}
and
\begin{equation}\label{eq:small-correction-control}
 \|w_\varepsilon\|_{L^\infty((\tau-\delta,\tau)\times I)}
 \leqslant C_\delta\|r_\varepsilon\|_{L^2(0,1)}
 \longrightarrow0.
\end{equation}
Define the concatenated control
\begin{equation*}\label{eq:concatenated-recovery-control}
 v_\varepsilon(t,x)=
 \begin{cases}
 u_0^{\tau-\delta}(t,x),&0<t<\tau-\delta,\\
 w_\varepsilon(t,x),&\tau-\delta<t<\tau.
 \end{cases}
\end{equation*}
Using the semigroup property and the definition of $r_\varepsilon$, we
obtain
\begin{align*}
 \mathbb T_\tau^\varepsilon z^0+\Phi_\tau^\varepsilon v_\varepsilon
 &=\mathbb T_\delta^\varepsilon
 \left(\mathbb T_{\tau-\delta}^\varepsilon z^0
 +\Phi_{\tau-\delta}^\varepsilon u_0^{\tau-\delta}\right)
 +\int_{\tau-\delta}^\tau
 \mathbb T_{\tau-t}^\varepsilon Bw_\varepsilon(t)\,{\rm d}t\\
 &=0
\end{align*}
by \eqref{eq:translated-correction-terminal-equation}. Hence
$v_\varepsilon$ is admissible for the $\varepsilon$-problem in time
$\tau$. Since its two pieces have disjoint time supports,
\begin{equation*}\label{eq:norm-concatenated-recovery-control}
 \|v_\varepsilon\|_{L^\infty((0,\tau)\times I)}
 =\max\left\{
 N_0(\tau-\delta),
 \|w_\varepsilon\|_{L^\infty((\tau-\delta,\tau)\times I)}
 \right\}.
\end{equation*}
It follows from \eqref{eq:small-correction-control} that
\begin{equation}\label{eq:limsup-before-delta}
\limsup_{\varepsilon\to0}N_\varepsilon(\tau)
\leqslant N_0(\tau-\delta).
\end{equation}
No upper-semicontinuity theorem is used here: the limsup estimate follows
from the explicitly constructed admissible controls $v_\varepsilon$.
Finally, the continuity of $\tau\mapsto N_0(\tau)$ is part of Theorem
\ref{tm:normoptcon}, quoted from
\cite[Theorems~3.2 and~3.3]{tucsnak2025norm}. Letting $\delta\to0^+$ in
\eqref{eq:limsup-before-delta} yields
\begin{equation*}\label{eq:limsup-N-epsilon}
\limsup_{\varepsilon\to0}N_\varepsilon(\tau)\leqslant N_0(\tau).
\end{equation*}
Together with \eqref{eq:liminf-N-epsilon}, this proves
\eqref{eq:convergence-N-epsilon}.
\end{proof}

We are now in a position to prove our first main result.

\begin{proof}[Proof of Theorem~\ref{thm:introduction-norm-optimal}]
Existence, uniqueness and the bang-bang identity
\eqref{eq:bang-bang-norm-optimal-epsilon} follow from Proposition
\ref{prop:uniform-H1-H3} and Theorem \ref{tm:normoptcon}. Convergence of
the optimal values is Proposition
\ref{prop:convergence-norm-optimal-values}. It remains to prove the
asserted convergence of the optimal controls.

Every sequence $\varepsilon_n\to0$ has, by Proposition
\ref{prop:uniform-bound-N-epsilon}, a subsequence such that
$u_{\varepsilon_n}^\tau$ converges weakly-* to some $u$. The argument leading
to \eqref{eq:terminal-equation-limit} shows that $u$ controls the
homogenized system to zero in time $\tau$. Moreover, Proposition
\ref{prop:convergence-norm-optimal-values} and weak-* lower
semicontinuity of the $L^\infty$-norm, recalled in Step~1 above, give
\[
\|u\|_{L^\infty((0,\tau)\times I)}\leqslant N_0(\tau).
\]
Hence $u$ is norm-optimal. Its uniqueness, supplied by
\cite{tucsnak2025norm}, implies $u=u_0^\tau$. Since every subsequence has the
same possible limit, the entire family satisfies
\begin{equation*}\label{eq:weak-star-convergence-optimal-controls}
u_\varepsilon^\tau\xrightharpoonup{*}u_0^\tau
\quad\text{in }L^\infty((0,\tau)\times I).
\end{equation*}

The weak-* convergence implies weak convergence in
$L^2((0,\tau)\times I)$. By the bang-bang property
\eqref{eq:bang-bang-norm-optimal-epsilon} and Proposition
\ref{prop:convergence-norm-optimal-values},
\begin{equation*}\label{eq:L2-norms-optimal-controls}
\|u_\varepsilon^\tau\|_{L^2((0,\tau)\times I)}^2
=\tau|I|N_\varepsilon(\tau)^2
\longrightarrow \tau|I|N_0(\tau)^2
=\|u_0^\tau\|_{L^2((0,\tau)\times I)}^2.
\end{equation*}
The uniform convexity of $L^2$ therefore gives strong convergence in
$L^2$. Strong convergence in $L^p$ for $1\leqslant p<2$ follows from H\"older's
inequality, whereas the case $2<p<\infty$ follows by interpolation with
the uniform $L^\infty$ bound. Consequently,
\begin{equation*}\label{eq:strong-Lp-convergence-optimal-controls}
u_\varepsilon^\tau\longrightarrow u_0^\tau
\quad\text{strongly in }L^p((0,\tau)\times I)
\qquad(1\leqslant p<\infty),
\end{equation*}
which completes the proof.
\end{proof}

\begin{remark}\label{rem:fixed-space-argument}
The proof uses only the uniform estimate \eqref{eq:uniform-H3}, compactness
in the fixed control space, and the short-final-correction argument. This is
the same general principle that underlies the uniform null-control
construction in \cite{Lop_Zuaz}, although the spectral estimate of Section
\ref{sec:spectral-observability} allows us to avoid the three-step
construction used there.
\end{remark}
\section{Proof of Theorem~\ref{th_eps_fix_c}}
\label{sec:time-optimal-homogenization}

We now pass from norm-optimal controls to time-optimal controls. The argument
follows the general pattern of \cite{tucsnak2016perturbations}, but the main
compactness ingredient is supplied here by the convergence of norm-optimal
values proved in Proposition
\ref{prop:convergence-norm-optimal-values}. The
strict monotonicity, continuity, uniqueness and bang-bang results used below
are those recalled from \cite{tucsnak2025norm} in Section
\ref{sec:abstract-time-optimal}.

Fix $z^0\in L^2(0,1)\setminus\{0\}$ and $M>0$. For
$\varepsilon\geqslant 0$, define
\begin{equation*}\label{eq:time-optimal-value-epsilon}
\tau_\varepsilon^*(M)
=\inf\left\{\tau>0:\text{ there exists }u\in
L^\infty((0,\tau)\times I),\ \|u\|_{L^\infty}\leqslant M,
\ \mathbb T_\tau^\varepsilon z^0+\Phi_\tau^\varepsilon u=0\right\}.
\end{equation*}
The function $\tau_\varepsilon^*$ also depends on $z^0$, but we suppress this dependence to simplify the notation. 
For each $\varepsilon\geqslant 0$, the results of
\cite{tucsnak2025norm} and Proposition \ref{prop:uniform-H1-H3} imply that this infimum is attained by a unique
control $u_\varepsilon^*$ and that
\begin{equation}\label{eq:time-norm-inverse-relation}
N_\varepsilon(\tau_\varepsilon^*(M))=M.
\end{equation}
Moreover,
\begin{equation*}\label{eq:bang-bang-time-optimal-epsilon}
|u_\varepsilon^*(t,x)|=M
\quad\text{for almost every }(t,x)\in
(0,\tau_\varepsilon^*(M))\times I.
\end{equation*}

We first record a uniform upper bound for the optimal times.

\begin{proposition}\label{prop:uniform-upper-optimal-times}
There exists $\widehat \tau>0$, depending only on $M$, $K$, $|I|$ and
$\|z^0\|_{L^2(0,1)}$, such that
\begin{equation}\label{eq:uniform-upper-optimal-times}
\tau_\varepsilon^*(M)\leqslant \widehat \tau
\qquad\qquad(\varepsilon\geqslant 0).
\end{equation}
\end{proposition}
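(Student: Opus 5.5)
The plan is to produce, for a single time $\widehat\tau$ chosen independently of $\varepsilon$, an admissible control of amplitude at most $M$ that steers $z^0$ to zero for every $\varepsilon\geqslant0$. The definition of $\tau_\varepsilon^*(M)$ as an infimum then gives \eqref{eq:uniform-upper-optimal-times} immediately. Two ingredients will be combined: the uniform exponential decay of the free semigroups, and the uniform control cost in Lemma \ref{lem:uniform-correction} on an interval of fixed length.

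\textbf{Step 1 (uniform decay).} By \eqref{eq:uniform-form-bounds} and the Poincar\'e inequality $\|v'\|_{L^2(0,1)}^2\geqslant\pi^2\|v\|_{L^2(0,1)}^2$ on $H_0^1(0,1)$, we have
\[
\mathfrak a_\varepsilon(v,v)\geqslant K^{-1}\pi^2\|v\|^2_{L^2(0,1)}.
\]
Hence the smallest eigenvalue of $A_\varepsilon$ is at least $\pi^2/K$, and the spectral theorem gives
\[
\|\mathbb T_t^\varepsilon\|_{\mathcal L(X)}\leqslant e^{-\pi^2t/K}
\qquad (t\geqslant0,\ \varepsilon\geqslant0).
\]
For $\varepsilon=0$ this also holds, since $a_0\in[K^{-1},K]$.

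\textbf{Step 2 (concatenation).} Fix $\delta=1$ and let $C_1=C\exp(C)$ be the constant of Lemma \ref{lem:uniform-correction}; it depends only on $K$ and $|I|$. Choose $T\geqslant0$ with
\[
C_1e^{-\pi^2T/K}\|z^0\|_{L^2(0,1)}\leqslant M,
\]
for instance
\[
T=\max\Bigl\{0,\tfrac{K}{\pi^2}\log\bigl(C_1\|z^0\|_{L^2(0,1)}/M\bigr)\Bigr\}.
\]
On $(0,T)$ apply the zero control, and set $r_\varepsilon=\mathbb T_T^\varepsilon z^0$, so that $\|r_\varepsilon\|_{L^2(0,1)}\leqslant e^{-\pi^2T/K}\|z^0\|_{L^2(0,1)}$. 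On $(T,T+1)$ apply the translate of $w_{\varepsilon,r_\varepsilon}$ from Lemma \ref{lem:uniform-correction}. Its $L^\infty$ norm is at most $C_1\|r_\varepsilon\|_{L^2(0,1)}\leqslant M$.

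\textbf{Step 3 (verification).} The semigroup property, used exactly as in the concatenation computation in Step 2 of Proposition \ref{prop:convergence-norm-optimal-values}, shows that
\[
\mathbb T^\varepsilon_{T+1}z^0+\Phi^\varepsilon_{T+1}v_\varepsilon=0
\]
for the concatenated control $v_\varepsilon$, whose $L^\infty$ norm is at most $M$. Therefore $\tau_\varepsilon^*(M)\leqslant\widehat\tau:=T+1$, and $\widehat\tau$ depends only on $M$, $K$, $|I|$ and $\|z^0\|_{L^2(0,1)}$.

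An alternative route is to use Proposition \ref{prop:uniform-bound-N-epsilon} directly: choose $\widehat\tau$ so that $C_{\widehat\tau}\|z^0\|_{L^2(0,1)}\leqslant M$. This does not work as stated, because $C_\tau=C\exp(C/\tau)$ does not tend to zero as $\tau\to\infty$. The decay factor from Step 1 is therefore the essential extra ingredient.

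The main obstacle is ensuring that every constant is genuinely $\varepsilon$-independent. This holds because the Poincar\'e bound uses only the lower ellipticity constant $K^{-1}$, and the cylindrical cost \eqref{eq:cylindrical-Linfty-control-cost} depends only on $K$ and $|I|$.
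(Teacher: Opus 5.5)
Your proposal is correct and follows essentially the paper's argument. The Poincar\'e bound $\lambda_1(A_\varepsilon)\geqslant \pi^2/K$ gives uniform decay, the system evolves freely, and it is then steered to zero on a final interval of length one using the $\varepsilon$-independent $L^\infty$ control cost. The only difference is the last step: you conclude directly from the definition of $\tau_\varepsilon^*(M)$ as an infimum by exhibiting an admissible control of amplitude at most $M$, whereas the paper bounds $N_\varepsilon(\widehat\tau)\leqslant M$ and then invokes the strict decrease of $N_\varepsilon$ together with $N_\varepsilon(\tau_\varepsilon^*(M))=M$, so your ending is slightly more elementary.
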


\begin{proof}
The Poincar\'e inequality and \eqref{low_coef_c} implies that, if we denote by $\lambda_1(A_\varepsilon)$ the smallest eigenvalue  of $A_\varepsilon$, that
\begin{equation*}\label{eq:uniform-first-eigenvalue}
\inf_{\varepsilon\geqslant 0}\lambda_1(A_\varepsilon)
\geqslant\frac{\pi^2}{K}.
\end{equation*}
Let $\tau>1$. Allow the system to evolve freely on $(0,\tau-1)$ and use the
uniform null-control estimate on the final interval $(\tau-1,\tau)$. This gives
\begin{equation*}\label{eq:uniform-decay-norm-optimal-value}
N_\varepsilon(\tau)
\leqslant C_1\exp\left(-\frac{\pi^2}{K}(\tau-1)\right)
\|z^0\|_{L^2(0,1)},
\end{equation*}
where $C_1$ is independent of $\varepsilon$. Choose $\tau$ so that
the right-hand side is at most $M$,
and fix $\widehat\tau=\tau+1$. Then
$N_\varepsilon(\widehat \tau)\leqslant M$. Since $N_\varepsilon$ is strictly
decreasing and satisfies \eqref{eq:time-norm-inverse-relation}, we obtain
\eqref{eq:uniform-upper-optimal-times}.
\end{proof}

A difficulty raised by the proof of our second main result is that the passage to the limit has to be done for functions which are not necessarily defined on the same time interval.
The following consequence of the results in Section \ref{sec:operator-approximation} will
be useful to tackle this issue.

\begin{lemma}\label{lem:varying-time-terminal-limit}
Let $\tau_\varepsilon\to \tau>0$. Then, for all $z^0\in L^2(0,1)$:
\begin{equation}\label{eq:varying-semigroup-time-limit}
\mathbb T_{\tau_\varepsilon}^\varepsilon z^0
\longrightarrow\mathbb T_\tau^0z^0
\quad\text{in }L^2(0,1).
\end{equation}
Moreover, let $(u_\varepsilon)$ be bounded in
$L^\infty((0,\widehat \tau)\times I)$, where
$\tau_\varepsilon,\tau<\widehat \tau$. Suppose that
\begin{equation*}\label{eq:support-varying-controls}
u_\varepsilon=0\quad\text{almost everywhere on }
(\tau_\varepsilon,\widehat \tau)\times I
\end{equation*}
and
\begin{equation*}\label{eq:weak-star-varying-controls}
u_\varepsilon\xrightharpoonup{*}u
\quad\text{in }L^\infty((0,\widehat \tau)\times I).
\end{equation*}
Then $u=0$ almost everywhere on $(\tau,\widehat \tau)\times I$ and
\begin{equation}\label{eq:varying-input-map-limit}
\int_0^{\tau_\varepsilon}
\mathbb T_{\tau_\varepsilon-t}^\varepsilon Bu_\varepsilon(t)\,{\rm d}t
\rightharpoonup
\int_0^\tau\mathbb T_{\tau-t}^0Bu(t)\,{\rm d}t
\quad\text{in }L^2(0,1).
\end{equation}

\end{lemma}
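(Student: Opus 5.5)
For \eqref{eq:varying-semigroup-time-limit} I split
\[
\mathbb T_{\tau_\varepsilon}^\varepsilon z^0-\mathbb T_\tau^0z^0
=\bigl(\mathbb T_{\tau_\varepsilon}^\varepsilon z^0-\mathbb T_{\tau_\varepsilon}^0 z^0\bigr)
+\bigl(\mathbb T_{\tau_\varepsilon}^0 z^0-\mathbb T_\tau^0z^0\bigr).
\]
Since $\tau_\varepsilon\to\tau<\widehat\tau$, for $\varepsilon$ small all $\tau_\varepsilon$ lie in $[0,\widehat\tau]$. The first term then tends to zero by the uniform convergence in $C([0,\widehat\tau];L^2(0,1))$ given by \eqref{eq:strong-semigroup-homogenization}. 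The second term tends to zero by strong continuity of $\mathbb T^0$. (Alternatively, for $\tau>0$ one can use the operator-norm estimate \eqref{eq:operator-norm-semigroup-homogenization} on $[\tau/2,\widehat\tau]$.)

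The support claim is a duality argument. I test the weak-$*$ convergence against $\varphi\in L^1((\tau,\widehat\tau)\times I)$, extended by zero. Fix $\eta>0$ and take $\varphi$ supported in $(\tau+\eta,\widehat\tau)\times I$. For $\varepsilon$ small, $\tau_\varepsilon<\tau+\eta$, so $\int u_\varepsilon\varphi=0$, and in the limit $\int u\varphi=0$. Letting $\eta\to0$ gives $u=0$ a.e. on $(\tau,\widehat\tau)\times I$.

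For \eqref{eq:varying-input-map-limit} I use the zero extension to write every term on the common interval $(0,\widehat\tau)$. The quantity $\int_0^{\tau_\varepsilon}\mathbb T^\varepsilon_{\tau_\varepsilon-t}Bu_\varepsilon(t)\,{\rm d}t$ is then no longer $\Phi^\varepsilon_{\widehat\tau}u_\varepsilon$, because of the time shift. I therefore test against an arbitrary $g\in L^2(0,1)$:
\[
\Bigl\langle \int_0^{\tau_\varepsilon}\mathbb T^\varepsilon_{\tau_\varepsilon-t}Bu_\varepsilon(t)\,{\rm d}t,\ g\Bigr\rangle
=\int_0^{\widehat\tau}\!\!\int_I u_\varepsilon(t,x)\,\psi_\varepsilon(t,x)\,{\rm d}x\,{\rm d}t,
\]
\[
\psi_\varepsilon(t,x)=\chi_{(0,\tau_\varepsilon)}(t)\,\bigl(\mathbb T^\varepsilon_{\tau_\varepsilon-t}g\bigr)(x).
\]
Here I use self-adjointness of $\mathbb T^\varepsilon$; for complex $L^2$ one takes conjugates. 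The target limit is the same expression with $u$ and $\psi_0(t,x)=\chi_{(0,\tau)}(t)(\mathbb T^0_{\tau-t}g)(x)$.

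The key step, and the expected main obstacle, is to show $\psi_\varepsilon\to\psi_0$ \emph{strongly} in $L^1((0,\widehat\tau)\times I)$; it suffices to show this in $L^2$. Combined with the weak-$*$ convergence of $u_\varepsilon$ and its uniform $L^\infty$ bound, this gives convergence of the pairing by the usual strong times weak-$*$ argument:
\[
\int u_\varepsilon\psi_\varepsilon-\int u\psi_0=\int u_\varepsilon(\psi_\varepsilon-\psi_0)+\int(u_\varepsilon-u)\psi_0 .
\]
To prove the strong convergence, fix the time $t$. On $t<\min(\tau_\varepsilon,\tau)$ I bound
\[
\|\mathbb T^\varepsilon_{\tau_\varepsilon-t}g-\mathbb T^0_{\tau-t}g\|
\leqslant\|\mathbb T^\varepsilon_{\tau_\varepsilon-t}g-\mathbb T^0_{\tau_\varepsilon-t}g\|
+\|\mathbb T^0_{\tau_\varepsilon-t}g-\mathbb T^0_{\tau-t}g\|.
\]
The first term is at most $\sup_{s\in[0,\widehat\tau]}\|\mathbb T_s^\varepsilon g-\mathbb T_s^0 g\|\to0$ by \eqref{eq:strong-semigroup-homogenization}. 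The second tends to zero uniformly in $t$ by uniform continuity of $s\mapsto\mathbb T^0_sg$ on $[0,\widehat\tau]$. The remaining time strip between $\tau_\varepsilon$ and $\tau$ has length $|\tau_\varepsilon-\tau|\to0$, and on it both $\psi_\varepsilon$ and $\psi_0$ have $L^2(I)$-norm at most $\|g\|$ by contractivity, so its contribution vanishes. This yields weak convergence in $L^2(0,1)$ and completes the plan.
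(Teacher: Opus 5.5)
Your proposal is correct and follows essentially the same route as the paper. You use the same splitting $\mathbb T^\varepsilon_{\tau_\varepsilon}z^0-\mathbb T^0_{\tau_\varepsilon}z^0$ plus $\mathbb T^0_{\tau_\varepsilon}z^0-\mathbb T^0_\tau z^0$ for the free term, and the same duality argument in which the kernel $\chi_{(0,\tau_\varepsilon)}(t)\,\mathbb T^\varepsilon_{\tau_\varepsilon-t}g$ converges strongly in $L^1$ (via uniform semigroup convergence, continuity of $\mathbb T^0$, and the vanishing strip between $\tau_\varepsilon$ and $\tau$) and is paired with the weak-$*$ convergence of $u_\varepsilon$. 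You supply more detail than the paper, for instance for the support claim. The one cosmetic point is that the first assertion does not presuppose $\widehat\tau$, so there you should work on a fixed interval such as $[0,\tau+1]$, as the paper does.
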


\begin{proof}
Let us first prove \eqref{eq:varying-semigroup-time-limit}. 
We have that:
\[\mathbb T_{\tau_\varepsilon}^\varepsilon z^0=
\mathbb T^0_{\tau_\varepsilon}z^0
+(\mathbb T_{\tau_\varepsilon}^\varepsilon z^0-\mathbb T^0_{\tau_\varepsilon}z^0).
\]
By the continuity of the semi-group we have that:
\[\mathbb T_{\tau_\varepsilon}^0 z^0\to \mathbb T_{\tau}^0 z^0.
\]
Moreover, by Proposition \ref{prop:semigroup-homogenization}, 
using the uniform convergence in $C^0([0,\tau+1];L^2(0,1))$:
\[\mathbb T_{\tau_\varepsilon}^\varepsilon z^0-\mathbb T_{\tau_\varepsilon}^0 z^0\to 0.
\]
Thus, we obtain \eqref{eq:varying-semigroup-time-limit}.

As for the assertion concerning the support follows directly from
\eqref{eq:support-varying-controls} and weak-* convergence. Finally, to prove
\eqref{eq:varying-input-map-limit}, take $f\in L^2(0,1)$ and extend every
control by zero to $(0,\widehat \tau)$. By self-adjointness, the relevant
duality product is
\begin{equation}\label{eq:varying-time-duality-product}
\int_0^{\widehat \tau}\int_Iu_\varepsilon(t,x)
\overline{\chi_{(0,\tau_\varepsilon)}(t)
(\mathbb T_{\tau_\varepsilon-t}^\varepsilon f)(x)}
\,{\rm d}x\,{\rm d}t.
\end{equation}
We have that:
\begin{equation*}\label{eq:varying-time-kernel-convergence}
\chi_{(0,\tau_\varepsilon)}(t)
\mathbb T_{\tau_\varepsilon-t}^\varepsilon f|_I
\longrightarrow
\chi_{(0,\tau)}(t)\mathbb T_{\tau-t}^0f|_I
\quad\text{in }L^1((0,\widehat \tau)\times I).
\end{equation*}
Indeed, the semigroup convergence is uniform on compact time intervals by Proposition \ref{prop:semigroup-homogenization},
the limit semigroup is strongly continuous, and the symmetric difference
of $(0,\tau_\varepsilon)$ and $(0,\tau)$ has measure tending to zero. We may
therefore pass to the limit in
\eqref{eq:varying-time-duality-product}, proving
\eqref{eq:varying-input-map-limit}. 
\end{proof}

\begin{proof}[Proof of Theorem~\ref{th_eps_fix_c}]
Existence, uniqueness and the bang-bang property of the time-optimal
controls follow from Proposition \ref{prop:uniform-H1-H3} and Theorem
\ref{tm:timeoptcon}. The uniform bound for the optimal times is
Proposition \ref{prop:uniform-upper-optimal-times}. We now prove the
convergence assertions.

We first prove convergence of the optimal times. Let
$\delta\in(0,\tau_0^*(M))$. Since $N_0$ is continuous and strictly
decreasing and
\begin{equation*}\label{eq:identity-limit-time-norm}
N_0(\tau_0^*(M))=M,
\end{equation*}
we have
\begin{equation}\label{eq:strict-bracketing-N0}
N_0(\tau_0^*(M)-\delta)>M>
N_0(\tau_0^*(M)+\delta).
\end{equation}
Proposition \ref{prop:convergence-norm-optimal-values}, applied at the two fixed
times in \eqref{eq:strict-bracketing-N0}, shows that, for all sufficiently
small $\varepsilon$,
\begin{equation*}\label{eq:strict-bracketing-Nepsilon}
N_\varepsilon(\tau_0^*(M)-\delta)>M>
N_\varepsilon(\tau_0^*(M)+\delta).
\end{equation*}
Using the strict decrease of $N_\varepsilon$ and
\eqref{eq:time-norm-inverse-relation}, we obtain
\begin{equation*}\label{eq:bracketing-optimal-times}
\tau_0^*(M)-\delta<\tau_\varepsilon^*(M)
<\tau_0^*(M)+\delta.
\end{equation*}
Since $\delta$ is arbitrary, we have proved
\begin{equation*}\label{eq:convergence-time-optimal-values}
\tau_\varepsilon^*(M)\longrightarrow\tau_0^*(M)
\qquad(\varepsilon\to0).
\end{equation*}

Choose $\widehat\tau$ as in Proposition
\ref{prop:uniform-upper-optimal-times}, increasing it if necessary, and
extend the time-optimal controls by zero:
\begin{equation}\label{eq:extended-time-optimal-controls}
\widetilde u_\varepsilon^*(t,x)
=\begin{cases}
u_\varepsilon^*(t,x),&0<t<\tau_\varepsilon^*(M),\\
0,&\tau_\varepsilon^*(M)\leqslant t<\widehat\tau.
\end{cases}
\end{equation}

The extensions \eqref{eq:extended-time-optimal-controls} are bounded by
$M$ in $L^\infty((0,\widehat \tau)\times I)$. Let a subsequence converge
weakly-* to some $u$. Lemma \ref{lem:varying-time-terminal-limit}, applied
with $\tau_\varepsilon=\tau_\varepsilon^*(M)$, allows us to pass to the limit
in
\begin{equation*}\label{eq:time-optimal-terminal-equation-epsilon}
\mathbb T_{\tau_\varepsilon^*(M)}^\varepsilon z^0
+\int_0^{\tau_\varepsilon^*(M)}
\mathbb T_{\tau_\varepsilon^*(M)-t}^\varepsilon
B u_\varepsilon^*(t)\,{\rm d}t=0.
\end{equation*}
It follows that $u=0$ after $\tau_0^*(M)$ and
\begin{equation*}\label{eq:time-optimal-terminal-equation-limit}
\mathbb T_{\tau_0^*(M)}^0z^0
+\int_0^{\tau_0^*(M)}
\mathbb T_{\tau_0^*(M)-t}^0Bu(t)\,{\rm d}t=0.
\end{equation*}
Moreover, $\|u\|_{L^\infty}\leqslant M$. Thus the restriction of $u$ to
$(0,\tau_0^*(M))\times I$ is a time-optimal control for the homogenized
system. By uniqueness, it is $u_0^*$. Hence every weak-* convergent
subsequence has the same limit, and therefore
\begin{equation*}\label{eq:weak-star-time-optimal-controls}
\widetilde u_\varepsilon^*
\xrightharpoonup{*}\widetilde u_0^*
\quad\text{in }L^\infty((0,\widehat\tau)\times I).
\end{equation*}

Finally, the bang-bang property gives
\begin{equation*}\label{eq:L2-norm-time-optimal-controls}
\|\widetilde u_\varepsilon^*\|_{L^2((0,\widehat \tau)\times I)}^2
=M^2|I|\tau_\varepsilon^*(M)
\longrightarrow
M^2|I|\tau_0^*(M)
=\|\widetilde u_0^*\|_{L^2((0,\widehat \tau)\times I)}^2.
\end{equation*}
Together with weak convergence in $L^2$, this implies strong convergence
in $L^2$. H\"older's inequality and interpolation with the uniform
$L^\infty$ bound then give
\begin{equation}\label{eq:strong-Lp-time-optimal-controls}
\widetilde u_\varepsilon^*
\longrightarrow\widetilde u_0^*
\quad\text{strongly in }L^p((0,\widehat\tau)\times I)
\qquad(1\leqslant p<\infty),
\end{equation}
which completes the proof.
\end{proof}

The result below gives, in particular, the convergence of the optimal state trajectories.

\begin{corollary}\label{cor:convergence-time-optimal-trajectories}
Let
\begin{equation*}\label{eq:time-optimal-trajectories}
z_\varepsilon^*(t)
=\mathbb T_t^\varepsilon z^0
+\int_0^t\mathbb T_{t-s}^\varepsilon
B\widetilde u_\varepsilon^*(s)\,{\rm d}s.
\end{equation*}
For every $S<\tau_0^*(M)$,
\begin{equation}\label{eq:trajectory-convergence-before-optimal-time}
z_\varepsilon^*\longrightarrow z_0^*
\quad\text{in }C([0,S];L^2(0,1)).
\end{equation}
In addition,
\begin{equation}\label{eq:terminal-states-zero}
z_\varepsilon^*(\tau_\varepsilon^*(M))=0,
\qquad
z_0^*(\tau_0^*(M))=0.
\end{equation}
\end{corollary}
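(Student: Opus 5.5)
The terminal identities \eqref{eq:terminal-states-zero} need no new argument. Since $\widetilde u_\varepsilon^*=u_\varepsilon^*$ on $(0,\tau_\varepsilon^*(M))$, we have
\[
z_\varepsilon^*(\tau_\varepsilon^*(M))=\mathbb T^\varepsilon_{\tau_\varepsilon^*(M)}z^0+\Phi^\varepsilon_{\tau_\varepsilon^*(M)}u_\varepsilon^*,
\]
and this vanishes because $u_\varepsilon^*$ is admissible for the time-optimal problem. The same reasoning with $\varepsilon=0$ gives $z_0^*(\tau_0^*(M))=0$.

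For \eqref{eq:trajectory-convergence-before-optimal-time}, fix $S<\tau_0^*(M)$. In fact $S$ only needs to satisfy $S\leqslant\widehat\tau$. For $t\in[0,S]$ we split
\[
z_\varepsilon^*(t)-z_0^*(t)=\bigl(\mathbb T_t^\varepsilon-\mathbb T_t^0\bigr)z^0+\int_0^t\bigl(\mathbb T^\varepsilon_{t-s}-\mathbb T^0_{t-s}\bigr)B\widetilde u_\varepsilon^*(s)\,{\rm d}s+\int_0^t\mathbb T^0_{t-s}B\bigl(\widetilde u_\varepsilon^*(s)-\widetilde u_0^*(s)\bigr)\,{\rm d}s .
\]
\begin{itemize}
\item The first term tends to zero uniformly on $[0,S]$ by \eqref{eq:strong-semigroup-homogenization}.
\item For the second term, we reuse the estimate from the proof of Lemma \ref{lem:input-map-homogenization}, based on \eqref{eq:operator-norm-semigroup-homogenization} and Cauchy--Schwarz. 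For each $t\leqslant S$ its norm is at most
\[
C\varepsilon\bigl(\log(1+S/\varepsilon^2)\bigr)^{1/2}\,\|\widetilde u_\varepsilon^*\|_{L^2((0,S)\times I)} .
\]
This bound is uniform in $t$, and $\|\widetilde u_\varepsilon^*\|_{L^2}\leqslant M(S|I|)^{1/2}$, so it tends to zero.
\item For the third term, contractivity of $\mathbb T^0$ and Cauchy--Schwarz give the uniform bound $\sqrt S\,\|\widetilde u_\varepsilon^*-\widetilde u_0^*\|_{L^2((0,S)\times I)}$. This tends to zero by the strong $L^2$ convergence \eqref{eq:strong-Lp-time-optimal-controls} from Theorem~\ref{th_eps_fix_c}.
\end{itemize}
Taking the supremum over $t\in[0,S]$ then yields convergence in $C([0,S];L^2(0,1))$.

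The only delicate point is obtaining uniformity in $t$ for the second term, since \eqref{eq:operator-norm-semigroup-homogenization} degenerates as $t-s\to0$. The integrated logarithmic bound handles this independently of $t$. Strong, rather than merely weak, control convergence is what makes the third term uniform. No restriction $S<\tau_0^*(M)$ is really needed; one could even take $S=\widehat\tau$.
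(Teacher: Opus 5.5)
Your proof is correct and follows essentially the paper's route. You split off the free semigroup term, handled by Proposition~\ref{prop:semigroup-homogenization}, and treat the control terms with the strong $L^2$ convergence \eqref{eq:strong-Lp-time-optimal-controls} together with the operator-norm bound from the proof of Lemma~\ref{lem:input-map-homogenization}. The only differences are that you attach the semigroup difference to $\widetilde u_\varepsilon^*$ rather than to the fixed limit control $\widetilde u_0^*$, and that your logarithmic bound makes the uniformity in $t$ explicit, which the paper only asserts.

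Your side remark is also right: thanks to the zero extensions, the argument works on all of $[0,\widehat\tau]$. So neither the restriction $S<\tau_0^*(M)$ nor the paper's observation that $S<\tau_\varepsilon^*(M)$ is actually needed.
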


\begin{proof}
For sufficiently small $\varepsilon$, one has
$S<\tau_\varepsilon^*(M)$. The semigroup term in
\eqref{eq:time-optimal-trajectories} converges uniformly on $[0,S]$ by
Proposition \ref{prop:semigroup-homogenization}. The control terms converge
uniformly because of the strong $L^2$ convergence in
\eqref{eq:strong-Lp-time-optimal-controls}, the uniform boundedness of the
input maps, and the strong convergence for a fixed input proved in Lemma
\ref{lem:input-map-homogenization}. This proves
\eqref{eq:trajectory-convergence-before-optimal-time}. The identities
\eqref{eq:terminal-states-zero} follow from the definition of the optimal
controls.
\end{proof}

\appendix

\section{Appendix: An abstract final state observability result}

In this appendix we provide an abstract version of a classical approach to final-state observability from
sets of positive measure. This type of result appears often in the control theoretic literature, but, as far as we know,
no abstract framework has been explicitly formulated. Consequently, with no claim of originality, we formulate here an abstract result in this direction and we provide its detailed proof.

Let $d\in \mathbb{N}$ and let $\Omega$ be an open bounded set in $\mathbb{R}^d$. Denote $X=L^2(\Omega)$  and let $A:\mathcal{D}(A)\to X$ be an unbounded positive
self-adjoint operator on $X$. We assume that $A$ has compact resolvents, so that
$-A$ generates an analytic semigroup $\mathbb{T}=(\mathbb{T})_{t\geqslant 0}$ on $X$.
We denote by
$(e_k)_{k\geqslant 1}$ an orthonormal basis of eigenvectors of $A$ and by
$(\lambda_k)_{k\geqslant 1}$ the corresponding eigenvalues, so that:
\begin{equation}\label{eq:abstract-eigenvalues}
Ae_k=\lambda_k e_k,\qquad
0<\lambda_1 \leqslant \lambda_2 \leqslant\cdots,\qquad
\lim_{k\to\infty}\lambda_k=+\infty.
\end{equation}

\begin{definition}\label{def:abstract-spectral-inequality}
We say that $A$ satisfies the measurable-set spectral inequality on $\mathcal{O}$ if, for
every $c>0$, there exists $N_c\geqslant 1$ such that
\begin{equation}\label{eq:abstract-spectral-inequality}
\left(\sum_{\sqrt{\lambda_k}\leqslant\mu}|b_k|^2\right)^{1/2}
\leqslant N_c\exp(N_c\mu)
\left\|\sum_{\sqrt{\lambda_k}\leqslant\mu}b_k e_k\right\|_{L^1(\mathcal O)}
\end{equation}
for every $\mu\geqslant 1$, every  sequence $(b_k)\in l^2(\mathbb{C})$, and every
measurable set $\mathcal O\subset\Omega$ such that  $|\mathcal O|\geqslant c$.
\end{definition}

\begin{theorem}\label{theo:poate_abs}
Assume that $A$ satisfies the measurable-set spectral inequality on $\mathcal{O}$. Let $\tau>0$ and let
$\mathcal D\subset(0,\tau)\times\Omega$ be measurable, with %positive $d+1$ dimensional Lebesgue measure
$|\mathcal D|>0$. Then there exists $K_{\tau,\mathcal D}>0$ such that
\begin{equation}\label{eq:abstract-final-observability}
\|\mathbb T_\tau\tau f\|_X
\leqslant K_{\tau,\mathcal D}
\int_{\mathcal D}|(\mathbb T_t f)(x)|\,{\rm d}x\,{\rm d}t
\qquad\qquad(f\in X).
\end{equation}
The constant depends on $A$ only through the constants $N_c$ in
\eqref{eq:abstract-spectral-inequality}.
\end{theorem}

\begin{proof}
\textit{Step 1. Low/high frequency decomposition and preliminary observability estimate.}
For $\mu\geqslant 1$, let $P_\mu$ be the orthogonal projection onto the span of
the eigenvectors for which the corresponding eigenvalues satisfy $\sqrt{\lambda_k}\leqslant \mu$. If $0\leqslant s<t$,
then
\begin{equation}\label{eq:abstract-high-frequency-decay}
\|(I-P_\mu)\mathbb T_t f\|_X
\leqslant \exp\bigl(-\mu^2(t-s)\bigr)
\|\mathbb T_s f\|_X.
\end{equation}
Fix a measurable set $\mathcal O\subset\Omega$ with $|\mathcal O|\geqslant c$.
Applying \eqref{eq:abstract-spectral-inequality} to
$P_\mu\mathbb T_t f$, and estimating the high-frequency part by
\eqref{eq:abstract-high-frequency-decay}, gives
\begin{equation*}
\begin{split}
\|\mathbb T_t f\|_X &\leqslant N_c\exp(N_c\mu) \|P_\mu\mathbb T_t f\|_{L^1(\mathcal O)} +
\exp\bigl(-\mu^2(t-s)\bigr)\|\mathbb T_s f\|_X \\
&\leqslant N_c\exp(N_c\mu)\|\mathbb T_t f\|_{L^1(\mathcal O)}+
N_c\exp(N_c\mu) \|P_\mu\mathbb T_t f-\mathbb T_t f\|_{L^1(\mathcal O)}
+\exp\bigl(-\mu^2(t-s)\bigr)\|\mathbb T_s f\|_X\\
&\leqslant N_c\exp(N_c\mu)\|\mathbb T_t f\|_{L^1(\mathcal O)}+
N_c|\mathcal O|^{1/2}\exp(N_c\mu) \|P_\mu\mathbb T_t f-\mathbb T_t f\|_{X}
+\exp\bigl(-\mu^2(t-s)\bigr)\|\mathbb T_s f\|_X
\\&\leqslant N_c\exp(N_c\mu)\|\mathbb T_t f\|_{L^1(\mathcal O)}+
N_c|\mathcal O|^{1/2}\exp\bigl(N_c\mu-\mu^2(t-s)\bigr)\|\mathbb T_s f\|_{X}
+\exp\bigl(-\mu^2(t-s)\bigr)\|\mathbb T_s f\|_X
\\
&\leqslant N_c\exp(N_c\mu)\|\mathbb T_t f\|_{L^1(\mathcal O)}
+(N_c|\mathcal O|^{1/2}+1)\exp\bigl(N_c\mu-\mu^2(t-s)\bigr)
\|\mathbb T_s f\|_X.
\end{split}
\end{equation*}
In conclusion, readjusting the value of $N_c$ we have that:
\begin{equation}\label{eq:perpestX}
\|\mathbb T_t f\|_X\leqslant  N_ce^{N_c\mu}\left[\|\mathbb T_tf\|_{L^1(\mathcal O)}+e^{-\mu^2(t-s)}\|\mathbb T_s f\|_{X}\right].
\end{equation}

\textit{Step 2. One-point-in-time interpolation inequality.}
Let us now prove that there are
constants $C_c\geqslant 1$ and $\theta_c\in(0,1)$ such that for every $f\in X$, $t>0$ and $s\in (0,t)$ we have
\begin{equation}\label{est:interpnorm}
\|\mathbb T_t f\|_X
\leqslant
\left(C_c\exp\left(\frac{C_c}{t-s}\right)
\|\mathbb T_t f\|_{L^1(\mathcal O)}\right)^{\theta_c}
\|\mathbb T_s f\|_X^{1-\theta_c}.
\end{equation}

For that, we remark that for each $N_c>0$ there exists $\tilde N_c>0$ such that:
\begin{equation}\label{est:eqNmumu2}
N_c\mu-\mu^2\frac{N_c-1}{N_c}(t-s)\leqslant \frac{\tilde N_c}{t-s}  \qquad\qquad(t>s>0).
\end{equation}
Indeed, the maximum of 
\begin{equation}\label{eq:mumapstoNmu-mu2}
\mu\mapsto N_c\mu-\mu^2\frac{N_c-1}{N_c}(t-s) 
\end{equation}
is obtained when
\[\mu=\frac{N_c^2}{2(N_c-1)}\cdot\frac{1}{t-s},\]
so by replacing this value in \eqref{eq:mumapstoNmu-mu2} we obtain \eqref{est:eqNmumu2} with $\tilde N_c=\frac{N_c^3}{4(N_c-1)}$.

It follows from \eqref{est:eqNmumu2} that
\[e^{N_c\mu}\leqslant e^{\mu^2\frac{N_c-1}{N_c}(t-s)}e^{\frac{\tilde N_c}{t-s}}.\]
So, combining this with \eqref{eq:perpestX}, we obtain that:
\begin{equation}\label{est:eN24t-s}
\begin{split}
\|\mathbb T_t  f\|_X&\leqslant N_ce^{\frac{\tilde N_c}{t-s}}\left[e^{\mu^2\frac{N_c-1}{N_c}(t-s)}\|\mathbb T_tf\|_{L^1(\mathcal O)} + e^{-\mu^2(t-s)/N_c} \|\mathbb T_sf\|_X\right]\\&
\leqslant N_ce^{\frac{\tilde N_c}{t-s}}\left[e^{\mu^2(t-s)}\|\mathbb T_tf\|_{L^1(\mathcal O)} + e^{-\mu^2(t-s)/N_c} \|\mathbb T_sf\|_X\right].
\end{split}
\end{equation}
It is important to remark that \eqref{est:eN24t-s} is true for all $\mu\geqslant0$. Notably, let us consider $\epsilon=e^{-\mu^2(t-s)/N_c}$. By choosing the appropriate value for $\mu$, %and reassigning the value of $N_c$, 
we have that the following estimate is true:
\[\|\mathbb T_tf\|_{X}\leqslant N_ce^{\frac{\tilde N_c}{t-s}}\left[\epsilon^{-N_c}\|\mathbb T_tf\|_{L^1(\mathcal O)}+\epsilon \|\mathbb T_sf\|_X.\right] \qquad \forall \epsilon\in(0,1].
\]
If $\|\mathbb T_tf\|_{L^1(\mathcal O)}=0$, by taking $\epsilon\to 0$, we obtain that $\|\mathbb T_tf\|_{X}=0$, and in particular \eqref{est:interpnorm} is true. Thus, from now on we suppose that $\|\mathbb T_tf\|_{L^1(\mathcal O)}\neq0$.

By optimizing the function $\epsilon\mapsto \epsilon^{-N_c}\|\mathbb T_tf\|_{L^1(\mathcal O)}+\epsilon \|\mathbb T_sf\|_X$ in $[0,\infty)$, we have that the global minimum is in:
\[\epsilon=\left( \frac{N_c\|\mathbb T_tf\|_{L^1(\mathcal O)}}{\|\mathbb T_sf\|_X}\right)^{\frac{1}{N_c+1}}.
\]
We have two options:
\begin{itemize}
\item If $\epsilon\leqslant 1$, we pick that value, and obtain:
\[\|\mathbb T_tf\|_{X}\leqslant N_c\left[N_c^{-N_c/(N_c+1)}+N_c^{1/(N_c+1)}\right]
e^{\frac{\tilde N_c}{t-s}}\|\mathbb T_tf\|_{L^1(\mathcal O)}^{1/(N_c+1)}\|\mathbb T_sf\|_X^{N_c/(N_c+1)}.
\]
\item Otherwise, we have that:
$N_c\|\mathbb T_tf\|_{L^1(\mathcal O)}\geqslant \|\mathbb T_sf\|_X$. Thus, in this case the estimate follows from the decay estimate, as, for any $\theta\in(0,1)$:
\[\|\mathbb T_tf\|_X\leqslant \|\mathbb T_sf\|_X
=\|\mathbb T_sf\|_X^\theta\|\mathbb T_sf\|_X^{1-\theta}
\leqslant N_c^\theta\|\mathbb T_tf\|^\theta_{L^1(\mathcal O)}
\|\mathbb T_sf\|_X^{1-\theta}.\]
\end{itemize}
In both cases, \eqref{est:interpnorm} follows.

\textit{Step 3. Space-time interpolation inequality on an interval $(t_1,t_2)$.}
For almost every $t\in(0,\tau)$, set
\begin{equation*}\label{eq:sections-of-D}
\mathcal D_t=\{x\in\Omega:(t,x)\in\mathcal D\}
\end{equation*}
and define
\begin{equation*}\label{eq:good-time-set}
E=\left\{t\in(0,\tau):\mathcal D_t \mbox{ is measurable and }|\mathcal D_t|\geqslant
\frac{|\mathcal D|}{2\tau}\right\}.
\end{equation*}
Fubini's theorem gives
\begin{equation*}\label{eq:measure-good-time-set}
|E|\geqslant \frac{|\mathcal D|}{2|\Omega|},
\end{equation*}
because
\[
|\mathcal D|=\int_0^\tau|\mathcal D_t|\,{\rm d}t
\leqslant |\Omega||E|+\frac{|\mathcal D|}{2}.
\]
Thus \eqref{est:interpnorm} applies at every 
$t\in E$, with $\mathcal O=\mathcal D_t$ and constants depending only on
$\frac{|\mathcal D|}{2\tau}$ and $|\Omega|$.

Let $0\leqslant t_1<t_2\leqslant \tau$ and assume that
$|E\cap(t_1,t_2)|\geqslant\eta(t_2-t_1)$ for some $\eta\in(0,1)$. Let 
$$t_3=t_1+\frac{\eta}{2}(t_2-t_1).$$
Then,
\begin{equation}\label{est:t3t2E}
    |E\cap (t_3,t_2)|=|E\cap (t_1,t_2)|-|E\cap (t_1,t_3)|\geqslant \eta(t_2-t_1) - \frac{\eta}{2}(t_2-t_1)=\frac{\eta}{2}(t_2-t_1).
\end{equation}
Integrating
\eqref{est:interpnorm} on $(t_3,t_2)\cap E$
and using H\"older's inequality gives
constants $C\geqslant 1$ and $\theta\in(0,1)$ such that
\begin{equation*}
|E\cap (t_3,t_2)| \|\mathbb T_{t_2}f\|_X
\leqslant \left(C\exp\left(\frac{C}{t_2-t_3}\right)
\int_{t_3}^{t_2}\chi_E(t)
\|\mathbb T_t f\|_{L^1(\mathcal D_t)}\,{\rm d}t\right)^\theta\|\mathbb T_{t_1}f\|_X^{1-\theta}.
\end{equation*}
Thus, using \eqref{est:t3t2E} we obtain that:
\begin{equation*} \|\mathbb T_{t_2}f\|_X
\leqslant \frac{2}{\mu(t_2-t_1)}
\left(C\exp\left(\frac{C}{(1-\eta/2)(t_2-t_1)}\right)
\int_{t_3}^{t_2}\chi_E(t)
\|\mathbb T_t f\|_{L^1(\mathcal D_t)}\,{\rm d}t\right)^\theta\|\mathbb T_{t_1}f\|_X^{1-\theta}.
\end{equation*}
Consequently, using that $1/s\leqslant e^{C/s}$ for some $C>0$ and all $s>0$, and reassigning the value of $C$ if needed, we obtain that: 
\begin{equation}\label{eq:estthetaDs}
\begin{split}
\|\mathbb T_{t_2}f\|_X
&\leqslant \left(C\exp\left(\frac{C}{t_2-t_1}\right)
\int_{t_1}^{t_2}\chi_E(t)
\|\mathbb T_t f\|_{L^1(\mathcal D_t)}\,{\rm d}t\right)^\theta\|\mathbb T_{t_1}f\|_X^{1-\theta}.
\end{split}
\end{equation}

\textit{Step 4. Conclusion with a telescoping sum.}
Multiplying the right-hand side of \eqref{eq:estthetaDs} by $(\epsilon^{-(1-\theta)})^\theta (\epsilon^\theta)^{1-\theta}$, and using Young's inequality:
\[\|\mathbb T_{t_2}f\|_X\leqslant \epsilon^{-(1-\theta)}Ce^{C/(t_2-t_1)}\int_{t_1}^{t_2}\chi_E(s)\|\mathbb T_tf\|_{L^1(\mathcal D_t)}dt+\epsilon^\theta\|\mathbb T_ {t_1}f\|_X,
\]
for all $\epsilon>0$. This implies that:
\[\epsilon^{1-\theta} e^{-C/(t_2-t_1)}\|\mathbb T_{t_2}f\|_X -\epsilon e^{-C/(t_2-t_1)}\|\mathbb T_ {t_1}f\|_X\leqslant C\int_{t_1}^{t_2}\chi_E(s)\|\mathbb T_sf\|_{L^1(\mathcal D_s)}ds.
\]
In particular, choosing $\epsilon=e^{-\frac{1}{t_2-t_1}}$ leads to:
\begin{equation}\label{eq:abstract-telescoping-form}
\begin{split}
&\exp\left(-\frac{C+1-\theta}{t_2-t_1}\right)
\|\mathbb T_{t_2}f\|_{L^2(\Omega)}
-\exp\left(-\frac{C+1}{t_2-t_1}\right)
\|\mathbb T_{t_1}f\|_{L^2(\Omega)}\\
&\qquad\leqslant C\int_{t_1}^{t_2}\chi_E(t)
\|\mathbb T_t f\|_{L^1(\mathcal D_t)}\,{\rm d}t.
\end{split}
\end{equation}
Choose a density point $\ell$ of $E$; that is, a point such that $\lim_{r\to0^+}\frac{|E\cap B(\ell,r)|}{|B(\ell,r)|}=1$, for $B$ the open Euclidean ball of radious $r$ centered in $\ell$. The standard density-point lemma (see \cite[pp. 256-257]{lions1971optimal})
provides $\ell_1\in(\ell,\tau)$ and a sequence
\begin{equation*}\label{eq:density-sequence}
\ell_{j+1}=\ell+q^{-j}(\ell_1-\ell),
\qquad
|E\cap(\ell_{j+1},\ell_j)|\geqslant
\frac13(\ell_j-\ell_{j+1}) \qquad\qquad(j\in \mathbb{N}).
\end{equation*}
Taking $q=(C+1)/(C+1-\theta)$, applying
\eqref{eq:abstract-telescoping-form} on every
$(\ell_{j+1},\ell_j)$, and summing makes the left-hand sides telescope.
We obtain
\[
\|\mathbb T_{\ell_1}f\|_X
\leqslant C_{\tau,\mathcal D}
\int_{\mathcal D}|(\mathbb T_t f)(x)|\,{\rm d}x\,{\rm d}t.
\]
Since $\mathbb T$ is a contraction semigroup on $X$ and
$\ell_1<\tau$, this implies \eqref{eq:abstract-final-observability}.

\end{proof}

\bibliographystyle{plain}
\bibliography{references}

\begin{thebibliography}{10}

\bibitem{alessandrini2008null}
G.~Alessandrini and L.~Escauriaza.
\newblock Null-controllability of one-dimensional parabolic equations.
\newblock {\em ESAIM COCV}, 14(2):284--293, 2008.

\bibitem{apraiz2013null}
J.~Apraiz and L.~Escauriaza.
\newblock Null-control and measurable sets.
\newblock {\em ESAIM COCV}, 19(1):239--254, 2013.

\bibitem{AvellanedaBardosRauch}
M.~Avellaneda, C.~Bardos, and J.~Rauch.
\newblock Contr\^olabilit\'e exacte, homog\'en\'eisation et localisation d'ondes dans un milieu non homog\`ene.
\newblock {\em Asymptotic Anal.}, 5:481--494, 1992.

\bibitem{Axler2020}
S.~Axler.
\newblock {\em Measure, Integration \& Real Analysis}, volume 282 of {\em Graduate Texts in Mathematics}.
\newblock Springer, Cham, 2020.

\bibitem{BonifaciusPieper2019}
L.~Bonifacius and K.~Pieper.
\newblock Strong stability of linear parabolic time-optimal control problems.
\newblock {\em ESAIM COCV}, 25:1, 2019.

\bibitem{brezis}
H.~Brezis.
\newblock {\em Functional analysis, Sobolev spaces and partial differential equations}.
\newblock Universitext. Springer, New York, 2011.

\bibitem{Castro1999}
C.~Castro.
\newblock Boundary controllability of the one-dimensional wave equation with periodic oscillating density.
\newblock {\em Asymptotic Anal.}, 20(3--4):317--350, 1999.

\bibitem{CastroZuazua1997}
C.~Castro and E.~Zuazua.
\newblock Contr\^ole de l'{\'e}quation des ondes \`a densit{\'e} rapidement oscillante \`a une dimension d'espace.
\newblock {\em C.R. Math.}, 324(11):1237--1242, 1997.

\bibitem{CastroZuazua2000}
C.~Castro and E.~Zuazua.
\newblock Low frequency asymptotic analysis of a string with rapidly oscillating density.
\newblock {\em SIAM J. on Appl. Math.}, 60(4):1205--1233, 2000.

\bibitem{KunischWang2013}
K.~Kunisch and L.~Wang.
\newblock Time optimal control of the heat equation with pointwise control constraints.
\newblock {\em ESAIM COCV}, 19(2):460--485, 2013.

\bibitem{LinShen2022}
F.~Lin and Z.~Shen.
\newblock Uniform boundary controllability and homogenization of wave equations.
\newblock {\em J. Eur. Math. Soc.}, 24(9):3031--3053, 2022.

\bibitem{lions1971optimal}
J.-L. Lions.
\newblock {\em Optimal control of systems governed by partial differential equations}, volume 170.
\newblock Springer, 1971.

\bibitem{Lop_Zuaz}
A.~L{\'o}pez and E.~Zuazua.
\newblock Uniform null-controllability for the one-dimensional heat equation with rapidly oscillating periodic density.
\newblock {\em Ann. Inst. H. Poincar\'e Anal. Non Lin\'eaire}, 19(5):543--580, 2002.

\bibitem{MeshkovaSuslina2016}
Yu.~M. Meshkova and T.~A. Suslina.
\newblock Homogenization of initial boundary value problems for parabolic systems with periodic coefficients.
\newblock {\em Appl. Anal.}, 95(8):1736--1775, 2016.

\bibitem{MRT}
S.~Micu, I.~Roventa, and M.~Tucsnak.
\newblock Time optimal boundary controls for the heat equation.
\newblock {\em J. Funct. Anal.}, 263(1):25--49, 2012.

\bibitem{micu2012time}
S.~Micu, I.~Roventa, and M.~Tucsnak.
\newblock Time optimal boundary controls for the heat equation.
\newblock {\em J. Func. Anal.}, 263(1):25--49, 2012.

\bibitem{Pontryagin}
L.~S. Pontryagin, V.~G. Boltyanskii, R.~V. Gamkrelidze, and E.~F. Mishchenko.
\newblock {\em The Mathematical Theory of Optimal Processes}.
\newblock Interscience, 1962.

\bibitem{tucsnak2025norm}
M.~Tucsnak.
\newblock On norm and time optimal controls for systems described by linear parabolic pdes.
\newblock {\em SIAM J. Control Optim.}, 63(1):349--374, 2025.

\bibitem{tucsnak2016perturbations}
M.~Tucsnak, G.~Wang, and C.-T. Wu.
\newblock Perturbations of time optimal control problems for a class of abstract parabolic systems.
\newblock {\em SIAM J. Control Optim.}, 54(6):2965--2991, 2016.

\bibitem{wang_linf}
G.~Wang.
\newblock {$\mathcal{L^\infty}$}-null controllability for the heat equation and its consequences for the time optimal control problem.
\newblock {\em SIAM J. Control Optim.}, 47(4):1701--1720, 2008.

\bibitem{WangEtAl2018}
G.~Wang, L.~Wang, Y.~Xu, and Y.~Zhang.
\newblock {\em Time Optimal Control of Evolution Equations}, volume~92 of {\em Progress in Nonlinear Differential Equations and Their Applications}.
\newblock Birkh\"auser, Cham, 2018.

\bibitem{WangZuazua}
G.~Wang and E.~Zuazua.
\newblock On the equivalence of minimal time and minimal norm controls for internally controlled heat equations.
\newblock {\em SIAM J. Control Optim.}, 50(5):2938--2958, 2012.

\bibitem{Yu}
H.~Yu.
\newblock Approximation of time optimal controls for heat equations with perturbations in the system potential.
\newblock {\em SIAM J. Control Optim.}, 52(3):1663--1692, 2014.

\bibitem{zhu2025propagation}
Y.~Zhu.
\newblock Propagation of smallness for solutions of elliptic equations in the plane.
\newblock {\em Math. Eng.}, 7(1):1--12, 2025.

\end{thebibliography}

\end{document}